\newcommand{\abs}[1]{\left\vert#1\right\vert}

\documentclass{amsart}

\newtheorem{theorem}{Theorem}[section]
\newtheorem{lemma}[theorem]{Lemma}
\newtheorem{case}{Case}
\theoremstyle{definition}
\newtheorem{definition}[theorem]{Definition}
\newtheorem{corollary}[theorem]{Corollary}

\usepackage[top=1.5in, left=1in, right=1in, bottom=1in]{geometry}
\usepackage{url}
\usepackage[utf8]{inputenc}
\usepackage{graphicx}
\usepackage{gensymb}
\usepackage{soul}
\usepackage{tikz}
\usepackage[mathlines]{lineno}
\usepackage{scrextend}
\usepackage{xcolor}
\usetikzlibrary[shapes.geometric]
\usepackage{caption}
\usepackage{amssymb,amsmath,amsthm}

\title{Landscapes of the Octahedron}
\author{Emiko Saso, Houston Schuerger, and Xin Shi}

\begin{document}

\maketitle
\begin{abstract}
The landscapes of a polyhedron are subsets of its nets one must consider to identify all shortest paths.  Landscapes of cubes and tetrahedra have been used to identify coordinate based formulas for the lengths of the shortest paths between points on these surfaces.  We extend these results to develop formulas for the lengths of the shortest paths between points on the surface of octahedra.
\end{abstract}
\section{Introduction}

Geodesics are a subject of much interest in geometry. This area focuses on minimizing the length of paths between two points on a surface. Alexandrov's Star Unfolding is an important discovery in this area \cite{Alexandrov}, which can be used to determine the shortest distance between any two points on the surface of a convex polyhedron.  It does so by using the star polygon, which is obtained by cutting the surface of a polyhedron along the paths between a fixed initial point $s$ and each vertex of the polyhedron, then unfolding the resulting cut surface onto a plane.  Finally, the shortest path between $s$ and any arbitrarily chosen second point $p$ is the path of shortest length between $p$ and one of the copies of $s$ created during the cutting process.

Dijkstra's Algorithm offers an alternative approach to computing the shortest paths on polyhedral surfaces, extending its applicability to a larger class of polyhedra \cite{Cook}. By applying this method, authors have provided solutions to problems involving optimizing pathways in various fields, including determination of robotic motion, route planning, telecommunications and transportation on polyhedral surfaces  \cite{Agarwal, Kanai, Chen, Magnanti}. Although the methods mentioned above can yield rapid results when executed with a computer, manual calculations are notably challenging and tend to introduce a significant margin of error. In contrast, this paper offers a solution by introducing coordinate-based formulas. This shifts the computational burden away from the end user so individuals seeking to apply our findings can easily perform numerical calculations.

In \cite{Fontenot}, Fontenot et. al. introduced the concepts of landscapes and orientations of polyhedra. A landscape is the union of a sequence of consecutive faces that the shortest path between two points can lie on, and is a subset of a net of the relevant polyhedron. An orientation is an embedding of a landscape into the Cartesian plane, thus allowing the use of formulas by transforming a synthetic object (or one without coordinates) into an analytical object (or one with a coordinate structure). Having developed a collection of landscapes and orientations, they introduced a coordinate system for the surfaces of convex polyhedra, and by using the tools above, determined formulas for the distances between points in each of these landscapes.  Finally, they identified the minimal collection of landscapes necessary to contain all of the shortest paths, which they called the valid landscapes.

We will apply the concepts introduced by Fontenot et. al. to the surface of octahedra, thus expanding this body of work. To achieve this goal, we will utilize their coordinate system and net substructures in order to find where these shortest paths can exist, thus identifying the collection of valid landscapes of the octahedron. The end result will be that for any pair of points on the surface of the octahedron, we will determine the lengths and locations of the shortest paths between them.

\section{Definitions}

We will adapt the approach used by Fontenot et. al. with the tetrahedron and cube in \cite{Fontenot}. In addition, all definitions given in this section were introduced in \cite{Fontenot}.  Let $\mathcal P_8$ be a unit octahedron, that is a regular octahedron for which all edges are of length 1.  To do any calculations concerning paths on the surface of $\mathcal P_8$, we will first fix a net for $\mathcal P_8$ and define a coordinate system for the set of points on the surface of $\mathcal P_8$.  Let the faces of $\mathcal P_8$ be labeled with elements of the set $\{F_1, F_2, F_3, F_4, F_5, F_6, F_7, F_8\}$ and the vertices be represented by elements of the set
\[\big{\{}\{1,4,6,7\}, \{1,2,5,6\}, \{1,2,3,4\}, \{2,3,5,8\}, \{3,4,7,8\}, \{5,6,7,8\}\big{\}}\] 
such that face $F_n$ is incident to vertex $\{n_1,n_2,n_3,n_4\}$ if and only if $n \in \{n_1,n_2,n_3,n_4\}$. Consider the net in Figure \ref{octnet}, and thus the relative locations of the faces of $\mathcal P_8$, to be fixed for the remainder of our discussion.

\begin{figure}[h]
  \centering
\begin{tikzpicture}[scale=1]

\filldraw[fill=black,draw=black] (0,0) circle (2pt);
\filldraw[fill=black,draw=black] (4,0) circle (2pt);
\filldraw[fill=black,draw=black] (8,0) circle (2pt);
\filldraw[fill=black,draw=black] (10,3.464) circle (2pt);
\filldraw[fill=black,draw=black] (12,0) circle (2pt);
\filldraw[fill=black,draw=black] (14,-3.464) circle (2pt);
\filldraw[fill=black,draw=black] (10,-3.464) circle (2pt);
\filldraw[fill=black,draw=black] (6,-3.464) circle (2pt);
\filldraw[fill=black,draw=black] (4,-6.928) circle (2pt);
\filldraw[fill=black,draw=black] (2,-3.464) circle (2pt);
\draw[line width = 0.05mm] (0,0) -- (12,0) -- (14,-3.464) -- (2,-3.464) -- (0,0);
\draw[line width = 0.05mm] (2,-3.464) -- (4,-6.928) -- (10,3.464) -- (12,0);
\draw[line width = 0.05mm] (2,-3.464) -- (4,0) -- (6,-3.464);
\draw[line width = 0.05mm] (8,0) -- (10,-3.464) -- (12,0);
\node[above, outer sep=55] at (2,-3.464) {$F_4$};
\node[below, outer sep=55] at (4,0) {$F_1$};
\node[above, outer sep=55] at (6,-3.464) {$F_6$};
\node[below, outer sep=55] at (8,0) {$F_5$};
\node[above, outer sep=55] at (10,-3.464) {$F_8$};
\node[below, outer sep=55] at (12,0) {$F_3$};
\node[above, outer sep=55] at (4,-6.928) {$F_2$};
\node[below, outer sep=55] at (10,3.464) {$F_7$};
\node[above][font = {\small}] at (0,0) {$\{3,4,7,8\}$};
\node[above][font = {\small}] at (4,0) {$\{1,4,6,7\}$};
\node[font = {\small}] at (7.2,.27) {$\{5,6,7,8\}$};
\node[right][font = {\small}] at (10,3.464) {$\{1,4,6,7\}$};
\node[right][font = {\small}] at (12,0) {$\{3,4,7,8\}$};
\node[below][font = {\small}] at (14,-3.464) {$\{1,2,3,4\}$};
\node[below][font = {\small}] at (10,-3.464) {$\{2,3,5,8\}$};
\node[font = {\small}] at (6.8,-3.734) {$\{1,2,5,6\}$};
\node[right][font = {\small}] at (4,-6.928) {$\{2,3,5,8\}$};
\node[left][font = {\small}] at (2,-3.464) {$\{1,2,3,4\}$};

\end{tikzpicture}
    \caption{Labeling of the Octahedron}
    \label{octnet}
\end{figure}
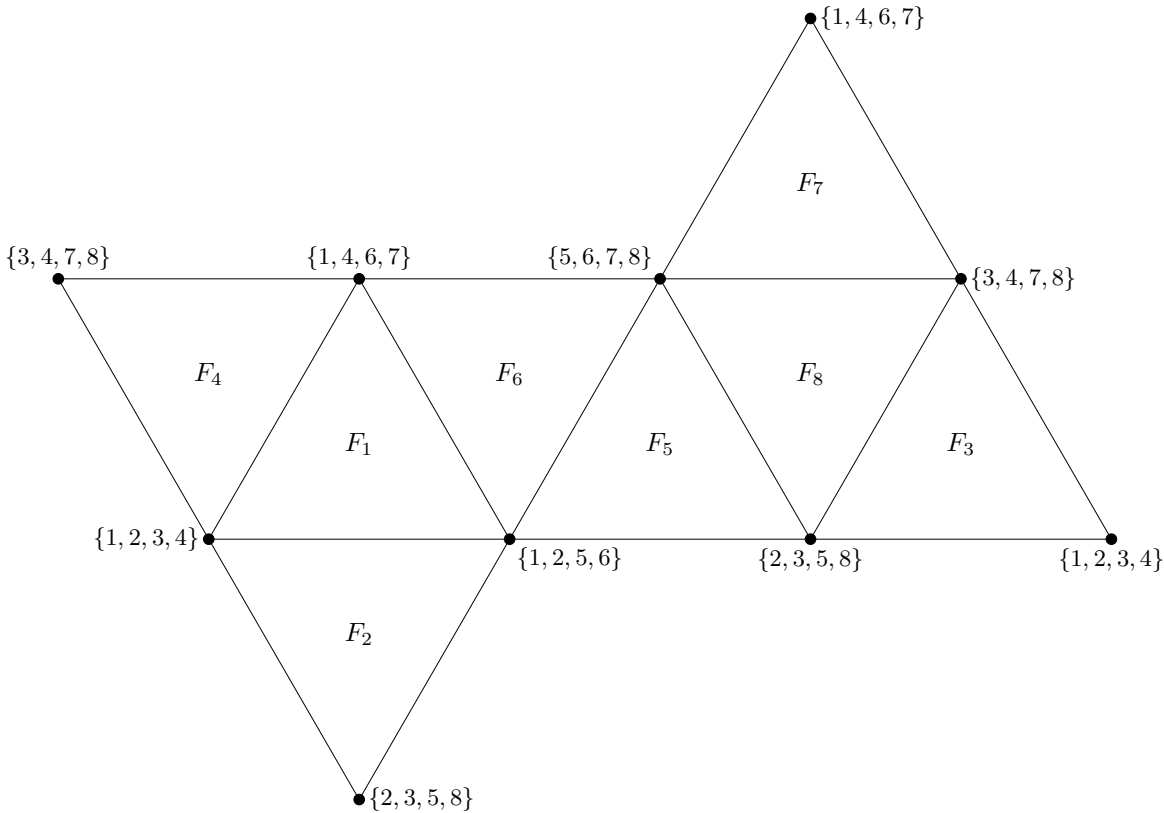
 
Let $F_n$ be a face of $\mathcal P_8$ and $g:F_n \rightarrow \mathbb R^2$ be a function mapping $F_n$ into the first quadrant, with an edge $e$ of $F_n$ mapped to the line segment $\overline{(0,0),(1,0)}$, preserving the distance between points of $F_n$ such that the exterior side of $F_n$ is facing upwards towards the viewer.  Given a point $p$ on $\mathcal P_8$, if $g(p)=(x_0,y_0)$, $p$ is an element of $F_n$, and $F_m$ is the other face of $\mathcal P_8$ incident to $e$, then $p$ will be said to have the ordered quadruple $(F_n, F_m, x_0, y_0)$ as a {\em representation}.  In addition, the point can be said to have {\em home-face} $F_n$, {\em shared-face} $F_m$, {\em $x$-coordinate} $x_0$, and {\em $y$-coordinate} $y_0$.

Let $N$ be a net of $\mathcal P_8$.  Let $G$ be the graph with vertices $\{F_i\}_{i=1}^8$ such that given $F_i,F_j$ with $1 \leq i \neq j \leq 8$ we have $F_iF_j \in E(G)$ if and only if $F_i$ and $F_j$ share an edge in $N$.  Then $G$ is the {\em dual graph} of $N$. Given a subgraph $H$ of $G$, if $H$ is a path of at least two vertices, then the union of the collection of faces of $N$ given by the vertices of $H$ form a {\em landscape} $L$, with $H$ as its dual graph.  If $F_{n_1}$ and $F_{n_2}$ are the vertices of $H$ with degree 1, then we say $L$ is a landscape from {\em origin face} $F_{n_1}$ to {\em destination face} $F_{n_2}$ and denote this by $L(F_{n_1} \rightarrow F_{n_2})$.  In addition, given a (proper) subgraph $K$ of $H$, if $K$ is a path of at least two vertices, then the union of the collection of faces of $N$ given by the vertices of $K$ form a {\em (proper) sublandscape} $L'$ of $L$.

Let $L$ be a landscape of $\mathcal P_8$, $F_n$ be contained in $L$, $F_m$ be a face of $\mathcal P_8$ adjacent to $F_n$, and $e$ be the edge shared by $F_n$ and $F_m$. 
Let $O=(L, F_n, F_m)$ reference the subset of $\mathbb R^2$ contained in the image of $L$ under the mapping given by $g:L \rightarrow \mathbb R^2$ which maps $F_n$ into the upper half-plane, $e$ to the line segment $\overline{(0,0),(1,0)}$, and preserves the distance between points in each face of $L$ such that the exterior side of each face of $L$ faces upwards towards the viewer. Refer to $O$ as an {\em orientation} of $\mathcal P_8$ and $g$ as {\em the function defining $O$}.  When we wish to view a point $p$ as an element of the orientation $(L, F_n, F_m)$ we will denote it by $p(L, F_n, F_m)$ and when we wish to denote its $x$-coordinate or $y$-coordinate we will denote them by $p(L, F_n, F_m)_x$ and $p(L, F_n, F_m)_y$ respectively.  Given a representation $(F_n,F_m,x_0,y_0)$ for a point $p$, we will refer to $(x_0,y_0)$ as the {\em standard position} of $p$ with respect to $(F_n,F_m,x_0,y_0)$.

If $p_1, p_2 \in O$ and the line segment $\overline{p_1p_2} \subset O$, then we call said line segment the {\em trail} from $p_1$ to $p_2$ in $O$ and denote it $T_O(p_1,p_2)$. If $\overline{p_1p_2} \subset O=(L,F_n,F_m)$, then we denote the length of $T_O(p_1,p_2)$ by $\abs{T_L(p_1,p_2)}$.  On the other hand, if $p_1,p_2 \in O$, but $\overline{p_1p_2} \not \subset O$ then we will set $\abs{T_L(p_1,p_2)}=\infty$.  Note that since the function $g:L \rightarrow O$ defining $O$ is distance preserving, $\abs{T_L(p_1,p_2)}$ is the length of the line segment between the points $g^{-1}(p_1)$ and $g^{-1}(p_2)$ in $L$, provided the line segment is contained in $L$.

The following theorem, introduced by Fontenot et. al., guarantees that considering the landscapes of $\mathcal P_8$ is sufficient for identifying the collection of all shortest paths on the surface of $\mathcal P_8$ .

\begin{theorem}\cite{Fontenot}
Let $\mathcal P$ be a convex unit polyhedron, $F_n$ and $F_m$ be two distinct faces of $\mathcal P$, and $p_1 \in F_n\setminus F_m$ and $p_2 \in F_m \setminus F_n$ be two distinct points on the surface of $\mathcal P$.  Then there exists an orientation $O=(L, F_n, F_m)$ of $\mathcal P$ such that the shortest path between $p_1$ and $p_2$ is $T_O(p_1, p_2)$.  
\end{theorem}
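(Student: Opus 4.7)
The plan is to leverage the classical fact that between any two points on a convex polyhedral surface there exists a shortest surface path $\gamma$ that is piecewise linear: on the interior of each face it is a straight segment, it crosses edges transversally with a flat total angle on either side, and (away from its endpoints) it avoids vertices. Given such a $\gamma$ from $p_1$ to $p_2$, I would extract the sequence of faces $\gamma$ visits, promote that sequence to a landscape by choosing an appropriate net, and then show that the standard unfolding of that landscape provides the required orientation $O$ in which $\gamma$ becomes the straight segment $T_O(p_1,p_2)$.

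First I would invoke the existence of a surface-minimizing path $\gamma$ from $p_1$ to $p_2$, which is standard on a compact convex surface. Since $p_1 \notin F_m$ and $p_2 \notin F_n$, the curve $\gamma$ must traverse at least one edge. Listing the faces in the order that $\gamma$ enters them yields a sequence $F_n = F_{i_0}, F_{i_1}, \ldots, F_{i_k} = F_m$ with consecutive faces sharing the edge crossed by $\gamma$; no face is repeated, since re-entering a face would contradict local length-minimization. Next I would exhibit a net $N$ of $\mathcal P$ whose dual graph contains the path $H = F_{i_0}F_{i_1}\cdots F_{i_k}$ as a subgraph: extend $H$ to any spanning tree of the dual graph of $\mathcal P$, and cut $\mathcal P$ along those edges whose dual edges are omitted. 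The union $L$ of the faces corresponding to $H$, viewed as a subset of $N$, is then a landscape from $F_n$ to $F_m$ in the sense of the definition, with $F_n$ and $F_m$ as its degree-$1$ vertices. Finally, let $g$ unfold $L$ into $\mathbb R^2$ starting with $F_n$ placed in the upper half-plane with the edge it shares with $F_{i_1}$ mapped to $\overline{(0,0),(1,0)}$; because $g$ is an isometry on each face and because $\gamma$ makes a flat angle at each edge crossing, the image $g(\gamma)$ is a straight line segment from $g(p_1)$ to $g(p_2)$ contained in $O = (L, F_n, F_m)$. Thus $T_O(p_1,p_2)$ exists and has length equal to that of $\gamma$; any other surface path unfolds to a planar curve of at least the same length, so $T_O(p_1,p_2)$ indeed realizes the shortest path.

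The main obstacle will be the soft analytic ingredients absorbed into the appeal to classical geodesic theory on convex polyhedra, namely existence, piecewise-linearity, avoidance of vertices away from the endpoints, and the flat-angle condition at edge crossings; these are precisely what guarantee that the unfolded image of $\gamma$ is a single line segment lying inside $O$ rather than a broken line that escapes the unfolded faces. A secondary subtlety is that the unfolding $g$ of $L$ need not be globally injective into $\mathbb R^2$, since distant faces of $L$ may overlap in the plane. This is harmless for the statement, because an orientation is defined as the image set and the trail only requires $\overline{g(p_1)g(p_2)} \subset O$, which is a local property established along the strip of faces traversed by $\gamma$. Borderline configurations (for example, $F_n$ and $F_m$ already adjacent, or $\gamma$ incident to a vertex at an endpoint) fall out as easy special cases, either trivially or by choosing the face sequence that avoids the degeneracy before unfolding.
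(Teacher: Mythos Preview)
The paper does not give its own proof of this theorem: it is quoted verbatim as a result of Fontenot et~al.\ and invoked without argument, so there is no in-paper proof to compare against. Your sketch is the standard and correct route to such a statement---existence of a shortest geodesic on a convex surface, its piecewise-linear structure with straight angle at each edge crossing and no interior vertices (the latter because the cone angle at every vertex of a convex polyhedron is strictly less than $2\pi$), extraction of the ordered face list, extension of that path in the dual to a spanning tree to obtain a net, and finally the observation that the isometric unfolding of the resulting strip sends $\gamma$ to a straight segment.

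One notational wrinkle worth flagging: in this paper's definition of an orientation $(L,F_n,F_m)$, the face $F_m$ must be \emph{adjacent} to $F_n$ (it only serves to pin down which edge of $F_n$ lies on the $x$-axis). So the orientation you actually build is $(L,F_n,F_{i_1})$, not literally $(L,F_n,F_m)$ as the theorem statement writes. You handled this correctly in your construction; the mismatch is a looseness in the quoted statement rather than in your argument. Your remark that faces cannot repeat along $\gamma$ and that global injectivity of the unfolding is not needed (only containment of the segment in the unfolded strip) are both the right observations to close the argument.
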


Since any fixed polyhedron $\mathcal P$ can have at most finitely many orientations, in \cite{Fontenot}, the concept of {\em surface distance} between points $p_1$ and $p_2$, denoted $d_{\mathcal P}(p_1,p_2)$, was defined to be
\[d_{\mathcal P}(p_1,p_2)=\min\left\{\abs{T_L(p_1,p_2)}: L ~\text{is a landscape of}~\mathcal P\right\}.\] 

\noindent In the next section, we will identify the minimal collection of landscapes in which all paths providing surface distances on $\mathcal P_8$ are contained. In order to do that, we need to consider an important definition.

\begin{definition}\cite{Fontenot}
Given a landscape $L_i$ of a convex unit polyhedron $\mathcal P$, $L_i$ is said to be a {\em valid} landscape of $\mathcal P$ if there exist points $p_1, p_2 \in L_i$ such that for all other landscapes $L_j$ of $\mathcal P$ with $p_1, p_2 \in L_j$, 
$\abs{T_{L_i}(p_1,p_2)} \leq \abs{T_{L_j}(p_1,p_2)}$ and if $\abs{T_{L_i}(p_1,p_2)} = \abs{T_{L_j}(p_1,p_2)}$ then $L_i$ does not contain $L_j$ as a sublandscape.
\end{definition}

\section{Landscapes of an Octahedron}
First, if a point is a vertex of the octahedron or lies on an edge of the octahedron  it may have multiple home-faces.  In particular, the vertex $\{n_1,n_2,n_3,n_4\}$ can be represented as $(F_{n_1}, F_{n_2}, 0, 0)$, $(F_{n_2}, F_{n_3}, 0, 0)$, $(F_{n_3}, F_{n_4}, 0, 0)$, and $(F_{n_4}, F_{n_1}, 0, 0)$, with the remaining vertices of the octahedron similarly having such representations. Next note, if a point lies on an edge of $\mathcal P_8$ say, $\overline {\{n_1,n_2,n_3,n_4\} \{n_1,n_2,n_5,n_6\}}$, and is represented as $(F_{n_1}, F_{n_2}, x, 0)$ then it can also be represented as $(F_{n_2}, F_{n_1}, 1-x, 0)$.  While this addresses one's ability to switch between representations using different home-faces for a point which is incident to more than one face, we still need to address how one switches between the representations of a point using different shared-faces.  Since each face of a convex unit tetrahedron and of $\mathcal P_8$ are equilateral triangles, a small modification of a lemma introduced by Fontenot et. al. provides us with the following.

\begin{figure}[h]
  \centering
\begin{tikzpicture}[scale=1]
    \filldraw[fill=black,draw=black] (3,4) circle (2pt);
    \filldraw[fill=black,draw=black] (.75,0) circle (2pt);
    \filldraw[fill=black,draw=black] (5.25,0) circle (2pt);
    \filldraw[fill=black,draw=black] (3.25,1.78) circle (2pt);
    \draw[line width = 0.025mm] (0,4) -- (6,4);
    \draw[line width = 0.025mm] (3,4) -- (5.25,0) -- (.75,0) -- (3,4);
    \draw[line width = 0.025mm] (.75,0) -- (0,0) -- (4,2.2);
    \draw[line width = 0.025mm] (3.25,1.78) -- (3.25,-.35);
    \draw[line width = 0.025mm] (-.015,0) -- (-.015,-.35) -- (5.25,-.35) -- (5.25,0);
    \draw[line width = 0.025mm] (3.1,0) -- (3.1,.15) -- (3.25,.15);
    \node[font = {\normalsize}] at (.75,2.75) {$F_{n_4}$};
    \node[font = {\normalsize}] at (5.25,2.75) {$F_{n_6}$};
    \node[font = {\small}] at (3.45,1.6) {$p$};
    \node[font = {\small}] at (4.3,-.55) {$1-x$};
    \node[font = {\small}] at (1.6,-.55) {$\sqrt{3}y$};
    \node[font = {\normalsize}] at (2.85,-.6) {$F_{n_2}$};
    \node[font = {\normalsize}] at (2.85,1.2) {$F_{n_1}$};
\usetikzlibrary{calc}
\newcommand\rightAngle[4]{
  \pgfmathanglebetweenpoints{\pgfpointanchor{#2}{center}}{\pgfpointanchor{#1}{center}}
  \coordinate (tmpRA) at ($(#2)+(\pgfmathresult+45:#4)$);
  \draw[black,thin] ($(#2)!(tmpRA)!(#1)$) -- (tmpRA) -- ($(#2)!(tmpRA)!(#3)$);
}
  \coordinate (O) at (4,2.2);
  \coordinate (X) at (0,0);
  \coordinate (Y) at (5.25,0);
  \rightAngle{X}{O}{Y}{0.25}
  \draw[line width = 0.05mm] (4,2.2) -- (4.4,2.42) -- (5.640453489,.2) -- (5.25,0);
\node[font = {\small}] at (5.2,1.3) {$u$};
  \draw[line width = 0.05mm] (3.8,2.5777) -- (-.2068669528,.368223176) -- (0,0);
\node[font = {\small}] at (3.4,2.5) {$v$};
\node[font = {\small}] at (2.1,1.9) {$2y$};
  \draw[line width = 0.05mm] (3.25,1.78) -- (3.039914163,2.15395279);
  \draw[line width = 0.05mm] (1.15,-.7114) -- (-.1,1.511);
  \draw[line width = 0.05mm] (4.85,-.7114) -- (6.1,1.51111);
\end{tikzpicture}
\caption{Representations of a Point $p$ with Different Shared-Faces, where $u=\frac{1-x+\sqrt{3}y}{2}$ and $v = \frac{\sqrt{3}-\sqrt{3}x-y}{2}$}
\end{figure}

\begin{lemma}\label{octahedronrepresentation}\cite{Fontenot}
Suppose $\{n_1,n_2,n_3,n_4,n_5,n_6,n_7,n_8\}$ $=$ $\{1,2,3,4,5,6,7,8\}$ such that in nets of our copy of $\mathcal P_8$ the faces $F_{n_1}$ and $F_{n_8}$ are opposite and $F_{n_2}$, $F_{n_4}$, and $F_{n_6}$ occur about $F_{n_1}$ in the same counter-clockwise order as $F_2$, $F_4$, and $F_6$ occur about $F_1$. If a point $p\in\mathcal P_8$ can be represented as $(F_{n_1}, F_{n_2}, x, y)$, then it can also be represented as 
\[\left(F_{n_1}, F_{n_6}, \frac {1-x+\sqrt{3}y}{2}, \frac{\sqrt{3}-\sqrt{3}x-y}{2}\right).\] 

\end{lemma}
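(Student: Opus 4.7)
The plan is to interpret the change of representation as a planar isometry and identify which one. Because both representations use $F_{n_1}$ as home-face, the two orientations embed the \emph{same} equilateral triangle $F_{n_1}$ into the upper half-plane; only which edge of $F_{n_1}$ sits on $\overline{(0,0),(1,0)}$ differs. Since both orientations place $F_{n_1}$ with its exterior side facing the viewer, the map sending $(x,y)$ to $(x',y')$ is an orientation-preserving isometry of the plane mapping the triangle with vertices $(0,0)$, $(1,0)$, and $(\tfrac12,\tfrac{\sqrt{3}}{2})$ to itself — hence one of the three rotational symmetries of that triangle.

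I would first pin down which rotation. Under the cyclic-ordering hypothesis, the three adjacent faces $F_{n_2}, F_{n_4}, F_{n_6}$ sit around $F_{n_1}$ in the same cyclic order as $F_2, F_4, F_6$ sit around $F_1$ in Figure~\ref{octnet}. This identifies, in the old orientation, which of the two non-$F_{n_2}$ edges of $F_{n_1}$ is the $F_{n_6}$-edge, along with the identity of each of its endpoints (one is shared with the $F_{n_2}$-edge, the other with the $F_{n_4}$-edge). In the new orientation the $F_{n_6}$-edge occupies $\overline{(0,0),(1,0)}$ while the cyclic order around $F_{n_1}$ is inherited; matching the $F_{n_2}$-endpoint to the correct end of $\overline{(0,0),(1,0)}$ then uniquely forces the isometry to be the $120^\circ$ rotation about the centroid $(\tfrac12,\tfrac{\sqrt{3}}{6})$ of the triangle.

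Finally, I would apply this rotation to $(x,y)$ to verify the coordinates $\left(\tfrac{1-x+\sqrt{3}y}{2},\tfrac{\sqrt{3}-\sqrt{3}x-y}{2}\right)$. Equivalently, and mirroring the labels $u$ and $v$ in the accompanying figure, $x'$ can be read off as the signed projection of the vector from the $F_{n_2}$-endpoint of the $F_{n_6}$-edge to $(x,y)$ onto the unit direction along that edge, and $y'$ as the perpendicular distance from $(x,y)$ to the line containing the $F_{n_6}$-edge (measured toward the $F_{n_1}$ side); either computation is short because the $F_{n_6}$-edge makes a fixed $60^\circ$ angle with the horizontal, making the trigonometric factors $\tfrac{1}{2}$ and $\tfrac{\sqrt{3}}{2}$ explicit. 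The hard part will be the orientation bookkeeping in the previous step — picking out the correct rotation (rather than its inverse) among the symmetries of the equilateral triangle from a careful interpretation of the cyclic-ordering hypothesis; once that vertex correspondence is fixed, the remaining arithmetic is routine.
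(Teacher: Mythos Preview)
The paper does not actually prove this lemma --- it is quoted from \cite{Fontenot}, and the only justification offered here is Figure~2, which depicts the new coordinates $u,v$ as the projection onto and perpendicular distance from the $F_{n_6}$-edge. Your proposal is correct, and the projection/perpendicular-distance computation you describe is precisely what that figure encodes; your additional framing via the rotational symmetries of the equilateral triangle is a clean way to pin down the orientation bookkeeping that the figure leaves implicit.
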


Having the concepts needed, we now turn our focus to determining just how many valid landscapes an octahedron has.  To construct the landscapes we will break the problem into cases determined by the number of vertices in the dual graphs of each landscape.  We first consider landscapes whose dual graphs contain two, three, and four vertices.  Later it will be shown that landscapes containing more than four vertices are not valid, but proof of this will be deferred until some necessary preliminaries are established.  Also note that due to the symmetries of $\mathcal P_8$ and the presence of a formula for switching from one shared-face to another, when considering points on adjacent faces in Theorem \ref{octahedron_Adjacent_Face}, we can assume without loss of generality that the representations of the points $p_1$ and $p_2$ are such that $p_1=(F_{n_1}, F_{n_2}, x_1, y_1)$ and $p_2=(F_{n_2}, F_{n_1}, x_2, y_2)$.

\begin{theorem}\label{octahedron_Adjacent_Face}
Let $F_{n_1}$ and $F_{n_2}$ be distinct and adjacent faces of $\mathcal P_8$, $p_1 = (F_{n_1}, F_{n_2}, x_1, y_1)$ and  $p_2 = (F_{n_2}, F_{n_1}, x_2, y_2)$.  Then there is one landscape $L_1(F_{n_1} \rightarrow F_{n_2})$ of $\mathcal P_8$ whose dual-graph is a path of two vertices and \[\abs{T_{L_1}(p_1, p_2)}=\sqrt{(x_1+x_2-1)^2+(y_1+y_2)^2}.\]
\end{theorem}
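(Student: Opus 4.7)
My plan is to set up the landscape orientation $O = (L_1, F_{n_1}, F_{n_2})$ explicitly, locate both points in it, and verify that the trail is a straight segment whose length is the claimed expression. Since $L_1$ must have a two-vertex dual-graph path with endpoints $F_{n_1}$ and $F_{n_2}$, the landscape is forced to consist of exactly the two triangles $F_{n_1}$ and $F_{n_2}$ glued along their shared edge, which gives uniqueness immediately.

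The central task is to place $p_2$ correctly in $O$. By definition, $(F_{n_2}, F_{n_1}, x_2, y_2)$ is measured in $F_{n_2}$'s own home-face embedding, in which $F_{n_2}$ occupies the upper half-plane with its edge to $F_{n_1}$ on $\overline{(0,0),(1,0)}$ and its exterior facing up; there $p_2$ lies at $(x_2, y_2)$. In the landscape orientation $O$, however, $F_{n_2}$ sits in the lower half-plane (opposite $F_{n_1}$), with the same shared edge on $\overline{(0,0),(1,0)}$ and the exterior again facing up. Both placements are rigid copies of the same equilateral triangle glued to $\overline{(0,0),(1,0)}$ with exterior up, but on opposite sides of the $x$-axis. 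The only rigid motion of $\mathbb{R}^2$ realizing this transition is the $180^\circ$ rotation about the midpoint $(1/2, 0)$ of the shared edge; reflection across the $x$-axis is excluded because it would swap the exterior and interior sides. Hence $p_2$ sits at $(1 - x_2, -y_2)$ in $O$, while $p_1$'s home-face embedding already agrees with $O$, so $p_1$ sits at $(x_1, y_1)$.

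It then remains to check that the segment from $(x_1, y_1)$ to $(1 - x_2, -y_2)$ lies in $L_1$ as embedded in $O$. The image of $L_1$ is the union of the equilateral triangles with vertex sets $\{(0,0), (1,0), (1/2, \sqrt{3}/2)\}$ and $\{(0,0), (1,0), (1/2, -\sqrt{3}/2)\}$, which is a convex rhombus, so any segment between its points stays inside. Therefore $\abs{T_{L_1}(p_1, p_2)}$ equals the Euclidean distance between $(x_1, y_1)$ and $(1-x_2, -y_2)$, which simplifies directly to $\sqrt{(x_1+x_2-1)^2 + (y_1+y_2)^2}$. The main obstacle is the $p_2$-reorientation step: one must argue carefully from the definition of an orientation (shared edge on $\overline{(0,0),(1,0)}$, exterior facing upward) that the embedding of $F_{n_2}$ inside $O$ differs from its home-face embedding by precisely the $180^\circ$ point reflection through $(1/2,0)$, and not by any reflection that would invert the face's orientation.
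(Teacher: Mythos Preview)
Your argument is correct and follows essentially the same route as the paper: identify the unique two-face landscape, place $p_2$ in the orientation $(L_1,F_{n_1},F_{n_2})$ via a $180^\circ$ rotation about the midpoint of the shared edge to obtain $(1-x_2,-y_2)$, and compute the Euclidean distance. You actually supply two details the paper leaves implicit---the orientation-preservation reason that rules out a reflection, and the convexity of the rhombus ensuring the segment lies in $L_1$---so your version is, if anything, slightly more complete.
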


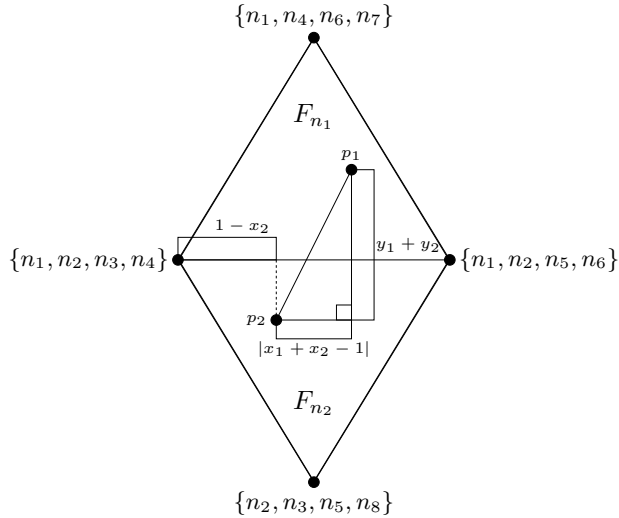
\begin{figure}[h!]
  \centering
\begin{tikzpicture}[scale=1]
    \node[diamond,
    draw,
    minimum width =3.6cm,
    minimum height =5.9cm] (d) at (3,4) {};
    \filldraw[fill=black,draw=black] (d.north) circle (2pt);
    \filldraw[fill=black,draw=black] (d.west) circle (2pt);
    \filldraw[fill=black,draw=black] (d.south) circle (2pt);
    \filldraw[fill=black,draw=black] (d.east) circle (2pt);
    \draw[line width = 0.05mm] (d.north) -- (d.west) -- (d.south) -- (d.east) -- cycle;
    \draw[line width=0.05mm] (d.east) -- (d.west);
    \node[above][font = {\small}] at (d.north) {$\{n_{1},n_{4},n_{6},n_{7}\}$};
    \node[below][font = {\small}] at (d.south) {$\{n_{2},n_{3},n_{5},n_{8}\}$};
    \node[left][font = {\small}] at (d.west) {$\{n_{1},n_{2},n_{3},n_{4}\}$};
    \node[right][font = {\small}] at (d.east) {$\{n_{1},n_{2},n_{5},n_{6}\}$};
    \node[above, inner sep=25] at (d.south) {$F_{n_2}$};
    \node[below, inner sep=25] at (d.north) {$F_{n_1}$};
    \draw [line width=0.01mm] (d.west) rectangle (2.5,4.3);
    \draw[ultra thin,dash pattern={on 1pt}] (2.5,4) -- (2.5,3.2);
    \filldraw[fill=black,draw=black] (2.5,3.2) circle (2pt);
    \node[left][font = {\tiny}] at (2.5,3.2) {$p_{2}$};
    \draw [ultra thin] (2.5,3.2) rectangle (3.5,2.95);
    \node[below, inner sep=.5][font = {\tiny}] at (3,2.95) {$|x_{1}+x_{2}-1|$};
    \draw [ultra thin] (3.5,3.2) rectangle (3.8,5.2);
    \filldraw[fill=black,draw=black] (3.5,5.2) circle (2pt);
    \node [right, inner sep=.5] [font = {\tiny}] at (3.8,4.2) {$y_{1}+y_{2}$};
    \draw [ultra thin] (3.5,5.2) -- (2.5,3.2);
    \draw [ultra thin] (3.3,3.2) rectangle (3.5,3.4);
    \node[above] [font = {\tiny}] at (3.5,5.2) {$p_{1}$};
    \node[font = {\tiny}] at (2.05,4.45) {$1-x_{2}$};
\end{tikzpicture}
    \caption{$L_1(F_{n_1} \rightarrow F_{n_2})$ and $T_{O_1}(p_1, p_2)$}
    \label{trail 1}
\end{figure}

\setcounter{case}{0}

\begin{proof}
Suppose $\{n_1, n_2, n_3, n_4, n_5, n_6, n_7,n_8\}=\{1, 2, 3, 4, 5, 6, 7, 8\}$ such that in nets of our copy of $\mathcal P_8$ the faces 
$F_{n_2}$, $F_{n_4}$, and $F_{n_6}$ occur in the same clockwise order around the face $F_{n_1}$ as the faces $F_2$, $F_4$, and $F_6$ occur around the face $F_1$, and $F_{n_b}$ and $F_{n_c}$ are opposite faces given that $b+c=9$.  Once we have constructed landscape $L_1$ we will then determine $p_2(L_1, F_{n_1},F_{n_2})$ so that we can use it to develop formulas for $\abs{T_{L_1}(p_1,p_2)}$ and $T_{O_1}(p_1,p_2)$, where $O_1=(L_1,F_{n_1},F_{n_2})$.

Since we are constructing landscapes from $F_{n_1}$ to $F_{n_2}$, and $F_{n_1}$ and $F_{n_2}$ have only one edge in common, there is only one landscape which allows a trail to start in $F_{n_1}$, pass directly from $F_{n_1}$ to $F_{n_2}$, and end in $F_{n_2}$. This landscape is given in Figure \ref{trail 1}, which we will refer to as $L_1$.  Since $p_2$'s home-face is $F_{n_2}$ and $p_2$'s shared-face is $F_{n_1}$, in the orientation $(L_1, F_{n_1}, F_{n_2})$ the point $p_2$ has been rotated $180$ degrees about the origin and shifted $1$ unit to the right from the standard position with respect to the representation $(F_{n_2},F_{n_1},x_2,y_2)$. Due to this $p_2(L_1, F_{n_1}, F_{n_2})=(1-x_2, -y_2)$.  Thus we get that
\[\abs{T_{L_1}(p_1, p_2)}=\sqrt{(x_1+x_2-1)^2+(y_1+y_2)^2}.\]

\end{proof}

Again, due to the symmetries of $\mathcal P_8$ and the presence of a formula for switching from one shared-face to another, when considering points on faces which are neither adjacent nor opposite in Theorem \ref{octahedron_non}, we can assume without loss of generality that the representations of the points $p_1$ and $p_2$ are such that $p_1=(F_{n_1}, F_{n_2}, x_1, y_1)$ and $p_2=(F_{n_5}, F_{n_6}, x_2, y_2)$.

\setcounter{case}{0}

\begin{theorem}\label{octahedron_non}
Let $F_{n_1}$ and $F_{n_5}$ be distinct faces of $\mathcal P_8$ which are neither opposite nor adjacent, $p_1 = (F_{n_1}, F_{n_2}, x_1, y_1)$ and  $p_2 = (F_{n_5}, F_{n_6}, x_2, y_2)$.  Then there are two landscapes whose dual graph is a path of three vertices, $L_2(F_{n_1}\rightarrow F_{n_5})$ and $L_3(F_{n_1}\rightarrow F_{n_5})$, with $\abs{T_{L_2}(p_1, p_2)}$ and $\abs{T_{L_3}(p_1, p_2)}$ given below:
\[\abs{T_{L_2}(p_1, p_2)} = \sqrt{\left(\frac {1-{x_1}+\sqrt{3}{y_1}}{2}-x_2+1\right)^2+\left(\frac{\sqrt{3}-\sqrt{3}{x_1}-{y_1}}{2}-y_2\right)^2},\] \[\abs{T_{L_3}(p_1, p_2)} = \sqrt{\left(x_1-1-\frac {1-{x_2}+\sqrt{3}{y_2}}{2}\right)^2+\left(y_1-\frac{\sqrt{3}-\sqrt{3}{x_2}-{y_2}}{2}\right)^2}.\]
\end{theorem}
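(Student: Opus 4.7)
The plan is to enumerate all landscapes $L(F_{n_1}\to F_{n_5})$ whose dual graph is a path on three vertices and, for each, unfold into a planar orientation so that applying the Euclidean distance formula to the images of $p_1$ and $p_2$ yields $\abs{T_L(p_1,p_2)}$. The relabeling convention in force (inherited from the proof of Theorem~\ref{octahedron_Adjacent_Face}) makes $F_{n_1}$ adjacent to exactly $F_{n_2},F_{n_4},F_{n_6}$ and opposite to $F_{n_8}$. Since the relabeling is a symmetry of $\mathcal P_8$, the adjacency of $F_5$ to $F_2,F_6,F_8$ in the original labeling (readily verified from the vertex data) transports to give $F_{n_5}$ adjacent to exactly $F_{n_2},F_{n_6},F_{n_8}$. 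The common neighbors of $F_{n_1}$ and $F_{n_5}$ are therefore precisely $F_{n_2}$ and $F_{n_6}$, giving exactly two three-vertex path landscapes from $F_{n_1}$ to $F_{n_5}$: call $L_2$ the one through $F_{n_6}$ and $L_3$ the one through $F_{n_2}$.

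For $L_2$, orient with $F_{n_1}$ in the upper half-plane and the edge $F_{n_1}\cap F_{n_6}$ along $\overline{(0,0),(1,0)}$. Since $p_1$'s given shared-face is $F_{n_2}$, Lemma~\ref{octahedronrepresentation} rewrites $p_1$ as $\bigl(F_{n_1},F_{n_6},\tfrac{1-x_1+\sqrt 3 y_1}{2},\tfrac{\sqrt 3-\sqrt 3 x_1-y_1}{2}\bigr)$, which is its position in this orientation. For $p_2=(F_{n_5},F_{n_6},x_2,y_2)$ the shared-face already matches, but $F_{n_5}$ has been unfolded past $F_{n_6}$: the same reflection-plus-translation argument used in the proof of Theorem~\ref{octahedron_Adjacent_Face}, now composed with the rigid motion that places $F_{n_6}$ as the triangle below the $x$-axis, determines $p_2$'s coordinates in the orientation. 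Plugging both positions into the distance formula produces the claimed expression. For $L_3$ the roles of $p_1$ and $p_2$ interchange: $p_1$ is already in standard position at $(x_1,y_1)$ in the orientation with $F_{n_1}$ on top and edge $F_{n_1}\cap F_{n_2}$ along the $x$-axis, and Lemma~\ref{octahedronrepresentation} applied about the home-face $F_{n_5}$ (to switch from shared-face $F_{n_6}$ to $F_{n_2}$), together with the same reflection-plus-translation, locates $p_2$.

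The main obstacle is the orientation bookkeeping. To invoke Lemma~\ref{octahedronrepresentation} as written one must verify that the counter-clockwise ordering around $F_{n_5}$ induced by the convention around $F_{n_1}$ matches the ordering in which the lemma was stated (rather than a mirror image or a cyclic shift), and one must pick the correct edge of $F_{n_6}$ (respectively $F_{n_2}$) across which $F_{n_5}$ gets unfolded in the orientation. Once these combinatorial choices are pinned down, the rest is coordinate arithmetic and the Pythagorean identity; the structural symmetry between the two formulas is forced by the $p_1\leftrightarrow p_2$ symmetry between $L_2$ and $L_3$.
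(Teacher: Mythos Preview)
Your plan matches the paper's proof: identify $F_{n_2}$ and $F_{n_6}$ as the two common neighbors of $F_{n_1}$ and $F_{n_5}$, apply Lemma~\ref{octahedronrepresentation} to change the shared-face of whichever endpoint needs it, and read off the Euclidean distance in a chosen orientation. One simplification worth noting: in the paper's orientations the three triangles of each landscape line up in a strip along the $x$-axis, so the far endpoint is obtained by a pure horizontal translation rather than a composed rotation or reflection---for instance $p_2(L_2,F_{n_1},F_{n_6})=(x_2-1,y_2)$ and, working in $(L_3,F_{n_5},F_{n_2})$, $p_1(L_3,F_{n_5},F_{n_2})=(x_1-1,y_1)$.
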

\begin{proof}
Let $\{n_1, n_2, n_3, n_4, n_5, n_6, n_7,n_8\}=\{1, 2, 3, 4, 5, 6, 7, 8\}$ such that in nets of our copy of $\mathcal P_8$ the faces $F_{n_2}$, $F_{n_4}$, and $F_{n_6}$ occur in the same clockwise order around the face $F_{n_1}$ as the faces $F_2$, $F_4$, and $F_6$ occur around the face $F_1$, and $F_{n_b}$ and $F_{n_c}$ are opposite faces given that $b+c=9$.  For the ease of notation in diagrams, we will let $u = p_1(L_2,F_{n_1},F_{n_6})_x$, $v =p_1(L_2,F_{n_1},F_{n_6})_y, s = p_2(L_3,F_{n_5},F_{n_2})_x$, and $t =p_2(L_3,F_{n_5},F_{n_2})_y$.

Since we are constructing landscapes of the form $L_a(F_{n_1}\rightarrow F_{n_5})$, the first face must be $F_{n_1}$, the third face must be $F_{n_5}$, and the second face must be a face which is adjacent to $F_{n_1}$. In particular, the second face must be a member of the set $\{F_{n_2}, F_{n_6}\}$ to which $F_{n_5}$ is also adjacent. Since there will be two choices for the second face, there will be a total of two such landscapes, depicted in Figures \ref{Trail 2} and \ref{Trail 3}.

\begin{figure}[h!]
  \centering
\begin{tikzpicture}[scale=1]
    \node[isosceles triangle,
    isosceles triangle apex angle=60,
    rotate=270,
    draw,
    minimum size=3cm] (T)at (0,0) {};
    \node[above][font = {\small}] at (T.right corner) {$\{n_{2},n_{3},n_{5},n_{8}\}$};
        \node[above][font = {\small}] at (T.left corner) {$\{n_{1},n_{2},n_{3},n_{4}\}$};
    \node[font = {\small}] at (1.3,-1.8) {$\{n_{1},n_{2},n_{5},n_{6}\}$};
    \filldraw[fill=black,draw=black] (T.right corner) circle (2pt);
    \filldraw[fill=black,draw=black] (T.left corner) circle (2pt);
    \filldraw[fill=black,draw=black] (T.apex) circle (2pt);
    \node[above, inner sep=60] at (T.apex) {$F_{n_2}$};
    \node[below, inner sep=20] at (T.left corner) {$F_{n_1}$};
    \node[below, inner sep=20] at (T.right corner) {$F_{n_5}$};
    \filldraw[fill=black,draw=black] (3.5,-2) circle (2pt);
    \filldraw[fill=black,draw=black] (-3.5,-2) circle (2pt);
    \draw[line width=0.05mm] (1.75,1) -- (3.5,-2) -- (0,-2);
    \draw[line width=0.05mm] (-1.75,1) -- (-3.5,-2) -- (0,-2);
    \node[left][font = {\small}] at (-3.5,-2) {$\{n_{5},n_{6},n_{7},n_{8}\}$};
    \node[right][font = {\small}] at (3.5,-2) {$\{n_{1},n_{4},n_{6},n_{7}\}$};
    \filldraw[fill=black,draw=black] (2.2,-.7) circle (2pt);
    \node[below][font = {\small}] at (2.2,-.7) {$p_{1}$};
    \filldraw[fill=black,draw=black] (-1.2,-1.4) circle (2pt);
    \node[below][font = {\small}] at (-1.2,-1.4) {$p_{2}$};    
    \draw[ultra thin] (-1.2,-1.4) -- (2.2,-.7);
    \draw [ultra thin] (-1.2,-1.4) rectangle (-1.5,-.7);
    \draw [ultra thin] (2.2,-.7) rectangle (-1.2,-.4);
    \draw [ultra thin] (-1.2,-.7) rectangle (-1,-.9);
    \node[above, inner sep=2][font = {\small}] at (.1,-.4) {$1+u-x_{2}$};
    \node[left, inner sep=2][font = {\small}] at (-1.5,-.95) {$|v-y_{2}|$};
    \draw [ultra thin] (-3.5,-2) rectangle (2.2,-2.4);
    \node[below][font = {\small}] at (-.65,-2.4) {$1+u$};  
\end{tikzpicture}
    \caption{$L_2(F_{n_1} \rightarrow F_{n_5})$ and $T_{O}(p_1, p_2),$} where $O=(L_2,F_{n_1},F_{n_6})$
  \label{Trail 2}
\end{figure}

We will first consider trails on $L_2(F_{n_1} \rightarrow F_{n_5})$ depicted in Figure \ref{Trail 2}.  Since $F_{n_2}$ is the shared face of $p_1$, by application of Lemma \ref{octahedronrepresentation}, $p_1=\left(F_{n_1}, F_{n_6}, \frac {1-{x_1}+\sqrt{3}{y_1}}{2}, \frac{\sqrt{3}-\sqrt{3}{x_1}-{y_1}}{2}\right)$. Since $p_2$'s home-face is $F_{n_5}$ and $p_2$'s shared-face is $F_{n_6}$, in the orientation $(L_2, F_{n_1}, F_{n_6})$ the point $p_2$ has been shifted $1$ unit to the left from $p_2$'s standard position with respect to the representation $(F_{n_5},F_{n_6},x_2,y_2)$.  Due to this $p_2(L_2, F_{n_1}, F_{n_6})=(x_2-1, y_2)$.  Thus we get that
\[\abs{T_{L_2}(p_1, p_2)} = \sqrt{\left(\frac {1-{x_1}+\sqrt{3}{y_1}}{2}-x_2+1\right)^2+\left(\frac{\sqrt{3}-\sqrt{3}{x_1}-{y_1}}{2}-y_2\right)^2}.\]

\begin{figure}[h!]
  \centering
\begin{tikzpicture}[scale=1]
    \node[isosceles triangle,
    isosceles triangle apex angle=60,
    rotate=270,
    draw,
    minimum size=3cm] (T)at (0,0) {};
    \node[above][font = {\small}] at (T.right corner) {$\{n_{1},n_{4},n_{6},n_{7}\}$};
    \node[above][font = {\small}] at (T.left corner) {$\{n_{5},n_{6},n_{7},n_{8}\}$};
    \node[font = {\small}] at (-1.3,-1.825) {$\{n_{1},n_{2},n_{5},n_{6}\}$};
    \filldraw[fill=black,draw=black] (T.right corner) circle (2pt);
    \filldraw[fill=black,draw=black] (T.left corner) circle (2pt);
    \filldraw[fill=black,draw=black] (T.apex) circle (2pt);
    \node[above, inner sep=50] at (T.apex) {$F_{n_6}$};
    \node[below, inner sep=30] at (T.left corner) {$F_{n_5}$};
    \node[below, inner sep=30] at (T.right corner) {$F_{n_1}$};
    \filldraw[fill=black,draw=black] (3.5,-2) circle (2pt);
    \filldraw[fill=black,draw=black] (-3.5,-2) circle (2pt);
    \draw[line width=0.05mm] (1.75,1) -- (3.5,-2) -- (0,-2);
    \draw[line width=0.05mm] (-1.75,1) -- (-3.5,-2) -- (0,-2);
    \node[left][font = {\small}] at (-3.5,-2) {$\{n_{1},n_{2},n_{3},n_{4}\}$};
    \node[right][font = {\small}] at (3.5,-2) {$\{n_{2},n_{3},n_{5},n_{8}\}$};
    \filldraw[fill=black,draw=black] (-1.2,-.7) circle (2pt);
    \node[above][font = {\small}] at (-1.2,-.7) {$p_{1}$};
    \filldraw[fill=black,draw=black] (2.2,-1.4) circle (2pt);
    \node[right][font = {\small}] at (2.2,-1.4) {$p_{2}$};    
    \draw[ultra thin] (2.2,-1.4) -- (-1.2,-.7);
    \draw [ultra thin] (-1.2,-1.4) rectangle (-1.5,-.7);
    \draw [ultra thin] (2.2,-1.4) rectangle (-1.2,-1.65);
    \draw [ultra thin] (-1.2,-1.4) rectangle (-1,-1.2);
    \node[below, inner sep=2][font = {\small}] at (1.2,-1.625) {$1+s-x_{1}$};
    \node[left, inner sep=2][font = {\small}] at (-1.5,-.95) {$|y_{1}-t|$};
    \draw [ultra thin] (-3.5,-2) rectangle (2.2,-2.4);
    \node[below][font = {\small}] at (-.65,-2.4) {$1+s$};  
    \end{tikzpicture}
     \caption{$L_3(F_{n_1} \rightarrow F_{n_5})$ and $T_{O}(p_1, p_2),$} where $O=(L_3,F_{n_1},F_{n_5})$
  \label{Trail 3}
\end{figure}

We now consider trails on $L_3(F_{n_1} \rightarrow F_{n_5})$ depicted in Figure \ref{Trail 3}.  Since $F_{n_6}$ is the shared face of $p_2$, by application of Lemma \ref{octahedronrepresentation}, $p_2=\left(F_{n_5}, F_{n_2}, \frac {1-{x_2}+\sqrt{3}{y_2}}{2}, \frac{\sqrt{3}-\sqrt{3}{x_2}-{y_2}}{2}\right)$. Since $p_1$'s home-face is $F_{n_1}$ and $p_1$'s shared-face is $F_{n_2}$, in the orientation $(L_3, F_{n_5}, F_{n_2})$ the point $p_1$ has been shifted $1$ unit to the left from $p_1$'s standard position with respect to the representation $(F_{n_1},F_{n_2},x_1,y_1)$.  Due to this $p_1(L_3, F_{n_5}, F_{n_2})=(x_1-1, y_1)$.  Thus we get that
\[\abs{T_{L_3}(p_1, p_2)} = \sqrt{\left(x_1-1-\frac {1-{x_2}+\sqrt{3}{y_2}}{2}\right)^2+\left(y_1-\frac{\sqrt{3}-\sqrt{3}{x_2}-{y_2}}{2}\right)^2}.\]
\end{proof}

As before, due to the symmetries of $\mathcal P_8$ and the presence of a formula for switching from one shared-face to another, when considering points on opposite faces in Theorem \ref{octahedron_opposite_Face}, we can assume without loss of generality that the representations of the points $p_1$ and $p_2$ are such that $p_1=(F_{n_1}, F_{n_2}, x_1, y_1)$ and $p_2=(F_{n_8}, F_{n_7}, x_2, y_2)$.

\begin{theorem}\label{octahedron_opposite_Face}
Let $F_{n_1}$ and $F_{n_8}$ be distinct and opposite faces of $\mathcal P_8$, $p_1=(F_{n_1},F_{n_2},x_1,y_1)$ and $p_2=(F_{n_8},F_{n_7},x_2,y_2)$.  Then there are at least 6 landscapes $L_a(F_{n_1} \rightarrow F_{n_8})$ which have a dual-graph path of four vertices. For each $a \in \mathbb N$ with $4 \leq a \leq 9$, the trail length $\abs{T_{L_a}(p_1,p_2)}$ is given in the table below:

\begingroup
\begin{center}
\begin{tabular}{|c|c|}
    \hline
    Landscape & Trail Length Formula\\
    \hline
    $L_4$ &\rule{-4pt}{20pt} $\sqrt{\left(\frac{2-x_1-\sqrt{3}y_1}{2}-\frac{-2+x_2+\sqrt{3}y_2}{2}\right)^2+\left(\frac{\sqrt{3}x_1-y_1}{2}-\frac{2\sqrt{3}-\sqrt{3}x_2+y_2}{2}\right)^2}$\\[9pt]
    \hline
    $L_5$ &\rule{-4pt}{20pt} $\sqrt{\left(\frac{1-x_1+\sqrt{3}y_1}{2}+x_2\right)^2+\left(\frac{\sqrt{3}-\sqrt{3}x_1-y_1}{2}+y_2-\sqrt{3}\right)^2}$\\[9pt]
    \hline
    $L_6$ &\rule{-4pt}{20pt} $\sqrt{\left(x_1-\frac{-1+x_2-\sqrt{3}y_2}{2}\right)^2+\left(y_1-\frac{\sqrt{3}x_2+y_2+\sqrt{3}}{2}\right)^2}$\\[9pt]
    \hline
    $L_7$ &\rule{-4pt}{20pt} $\sqrt{\left(x_1-\frac{2+x_2+\sqrt{3}y_2}{2}\right)^2+\left(y_1-\frac{-\sqrt{3}x_2+y_2+2\sqrt{3}}{2} \right)^2}$\\[9pt]
    \hline
    $L_8$ &\rule{-4pt}{20pt} $\sqrt{\left(\frac{2-x_1-\sqrt{3}y_1}{2}+x_2-2\right)^2+\left(\frac{\sqrt{3}x_1-y_1}{2}+y_2-\sqrt{3}\right)^2}$\\[9pt]
    \hline
    $L_9$ &\rule{-4pt}{20pt} $\sqrt{\left(\frac {1-{x_1}+\sqrt{3}{y_1}}{2}-\frac {3+{x_2}-\sqrt{3}{y_2}}{2}\right)^2+\left(\frac{\sqrt{3}-\sqrt{3}{x_1}-{y_1}}{2}-\frac{\sqrt{3}+\sqrt{3}{x_2}+{y_2}}{2}\right)^2}$\\[9pt]
    \hline
\end{tabular}
\end{center}
\endgroup
\end{theorem}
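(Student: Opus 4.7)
The plan is to mirror the structure of the proofs of Theorems \ref{octahedron_Adjacent_Face} and \ref{octahedron_non}: first enumerate the landscapes whose dual graphs are paths on four vertices from $F_{n_1}$ to $F_{n_8}$, and then for each landscape choose a convenient orientation in which both $p_1$ and $p_2$ can be expressed so the trail length is read off as a Euclidean distance.

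For the enumeration, I would observe that $F_{n_1}$ is adjacent to precisely $F_{n_2}, F_{n_4}, F_{n_6}$ and $F_{n_8}$ is adjacent to precisely $F_{n_3}, F_{n_5}, F_{n_7}$, while each $F_{n_i}$ with $i \in \{2,4,6\}$ is adjacent (on the octahedron) to exactly the two faces in $\{F_{n_3}, F_{n_5}, F_{n_7}\}$ which are not opposite to it. This produces exactly the six 3-edge paths $(F_{n_1}, F_{n_2}, F_{n_3}, F_{n_8})$, $(F_{n_1}, F_{n_2}, F_{n_5}, F_{n_8})$, $(F_{n_1}, F_{n_4}, F_{n_3}, F_{n_8})$, $(F_{n_1}, F_{n_4}, F_{n_7}, F_{n_8})$, $(F_{n_1}, F_{n_6}, F_{n_5}, F_{n_8})$, and $(F_{n_1}, F_{n_6}, F_{n_7}, F_{n_8})$, and I would label the corresponding landscapes $L_4, \ldots, L_9$ in the order in which they appear in the statement.

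For each landscape $L_a$, I would pick an orientation of the form $(L_a, F_{n_1}, F_{n_*})$ and apply Lemma \ref{octahedronrepresentation} iteratively---zero, one, or two times---around $F_{n_1}$ to rewrite $p_1 = (F_{n_1}, F_{n_2}, x_1, y_1)$ in terms of the required shared-face. For example, applying Lemma \ref{octahedronrepresentation} twice produces the representation $\left(F_{n_1}, F_{n_4}, \frac{2 - x_1 - \sqrt{3}y_1}{2}, \frac{\sqrt{3}x_1 - y_1}{2}\right)$, which already accounts for the corresponding coordinates in the $L_4$ formula. By the rotational symmetry of the octahedron, the analogous procedure applies to $p_2 = (F_{n_8}, F_{n_7}, x_2, y_2)$ around $F_{n_8}$. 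A translation by the vector from the origin to the image of the appropriate vertex of $F_{n_8}$ in the orientation then gives $p_2$'s coordinates, and $|T_{L_a}(p_1, p_2)|$ follows as the distance between the resulting two points.

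The main obstacle is the book-keeping: for each of the six landscapes I must (i) identify the correct number of applications of Lemma \ref{octahedronrepresentation} for $p_1$ and for $p_2$, and (ii) determine the precise position at which $F_{n_8}$ is embedded after three successive unfoldings. This placement depends on which two intermediate equatorial faces the landscape passes through, and is responsible for the various offset terms appearing in the formulas, such as the $y_2 - \sqrt{3}$ shift in $L_5$ and $L_8$, the $x_2 - 2$ shift in $L_8$, and the denominator-2 offset $(3+x_2-\sqrt{3}y_2)/2$ in $L_9$. Once each orientation's embedding is verified by explicit unfolding (as illustrated in Figures \ref{Trail 2} and \ref{Trail 3} for the three-face case), the six trail-length formulas follow by routine application of the distance formula.
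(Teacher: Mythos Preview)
Your proposal is correct and follows essentially the same approach as the paper: enumerate the six four-face paths using the adjacency structure, then for each landscape apply Lemma~\ref{octahedronrepresentation} the appropriate number of times to $p_1$ and $p_2$ and read off the Euclidean distance in a chosen orientation. One organizational refinement in the paper that you may find useful: the six landscapes fall into two congruent planar shapes (three ``zig-zag'' unfoldings and three ``staircase'' unfoldings), and within each shape the transformation taking $p_2$'s standard position to its position in the orientation $(L_a,F_{n_1},F_{n_\ell})$ is the \emph{same} rigid motion---a $180^\circ$ rotation followed by a fixed shift (by $(0,\sqrt{3})$ in one family, by $(2,\sqrt{3})$ in the other). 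Note in particular that this motion is not merely a translation as you wrote, but includes the $180^\circ$ rotation; this is what produces the sign pattern $(-x_2,-y_2)$ before the shift and accounts for terms like $+x_2$ and $+y_2-\sqrt{3}$ in the $L_5$ formula.
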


\setcounter{case}{0}
\begin{proof}
Let $\{n_1, n_2, n_3, n_4, n_5, n_6, n_7,n_8\}=\{1, 2, 3, 4, 5, 6, 7, 8\}$ such that in nets of our copy of $\mathcal P_8$ the faces $F_{n_2}$, $F_{n_4}$, and $F_{n_6}$ occur in the same clockwise order around the face $F_{n_1}$ as the faces $F_2$, $F_4$, and $F_6$ occur around the face $F_1$, and $F_{n_b}$ and $F_{n_c}$ are opposite faces given that $b+c=9$.  For the ease of notation in diagrams and arguments, for each $a$ with $4 \leq a \leq 9$, we will let $u_a = p_1(L_a,F_{n_1},F_{n_{\ell}})_x$, $v_a =p_1(L_a,F_{n_1},F_{n_{\ell}})_y, s_a = p_2(L_a,F_{n_6},F_{n_m})_x$, and $t_a =p_2(L_a,F_{n_6},F_{n_m})_y$.

Since we are constructing landscapes of the form $L_a(F_{n_1}\rightarrow F_{n_8})$, the first face must be $F_{n_1}$, the fourth face must be $F_{n_8}$, the second face, $F_{n_i}$ must be a face which is adjacent to $F_{n_1}$, and the third face, $F_{n_j}$, must be a face which is adjacent to the second face and $F_{n_8}$. In particular, $F_{n_i}$ must be a member of the set $\{F_{n_2}, F_{n_4}, F_{n_6}\}$ and $F_{n_j}$ will be a member of the set $\{F_{n_3},F_{n_5},F_{n_7}\}\setminus\{F_{n_{9-i}}\}$. Since there are three choices for $F_{n_i}$ and for each choice of $F_{n_i}$ there will in turn be two choices for the third face $F_{n_j}$, there will be a total of six such landscapes. These landscapes will produce two general structures, depicted in Figures \ref{Trail 4} and \ref{Trail 5}.

\begin{figure}[h]
  \centering
\begin{tikzpicture}[scale=1]
\filldraw[fill=black,draw=black] (0,6) circle (2pt);
\filldraw[fill=black,draw=black] (0,0) circle (2pt);
\filldraw[fill=black,draw=black] (3.5,0) circle (2pt);
\filldraw[fill=black,draw=black] (-3.5,6) circle (2pt);
\filldraw[fill=black,draw=black] (1.75,3) circle (2pt);
\filldraw[fill=black,draw=black] (-1.75,3) circle (2pt);
\draw[line width = 0.05mm] (0,0) -- (3.5,0) -- (1.75,3) -- (0,0) -- (-1.75,3) -- (1.75,3) -- (0,6) -- (-1.75,3) -- (-3.5,6) -- (0,6);
\node[below][font = {\small}] at (3.5,0) {$\{n_{1},n_{k},n_{\ell},n_{m}\}$};
\node[above][font = {\small}] at (0,6) {$\{n_{j},n_{k},n_{m},n_{8}\}$};
\node[above][font = {\small}] at (-3.5,6) {$\{n_{\ell},n_{m},n_{q},n_{8}\}$};
\node[below][font = {\small}] at (0,0) {$\{n_{1},n_{i},n_{\ell},n_{q}\}$};
\node[right][font = {\small}] at (1.75,3) {$\{n_{1},n_{i},n_{j},n_{k}\}$};
\node[font = {\small}] at (-.8,2.8) {$\{n_{i},n_{j},n_{q},n_{8}\}$};
\node[font = ] at (1.75,2.3) {$F_{n_1}$};
\node at (-.7,5.65) {$F_{n_8}$};
\node at (1.15,2.75) {$F_{n_i}$};
\node at (1.15,3.4) {$F_{n_j}$};
\draw[ultra thin,dash pattern={on 1pt}] (0,6) -- (0,0);
\filldraw[fill=black,draw=black] (2.3,1.15) circle (2pt);
\filldraw[fill=black,draw=black] (-2.3,5.3) circle (2pt);
\node[right][font = {\small}] at (2.3,1.15) {$p_{1}$};
\node[above][font = {\small}] at (-2.3,5.3) {$p_{2}$};
\draw[line width = 0.05mm] (2.3,1.15) -- (-2.3,5.3);
\draw[line width=0.05mm](-2.3,5.3) rectangle (-2.6,1.15);
\draw[line width=0.05mm](2.3,1.15) rectangle (-2.3,.85);
\draw[line width=0.05mm](-2.3,1.15) rectangle (-2.1,1.35);
\draw[ultra thin,dash pattern={on 1pt}] (-1.75,3) -- (-2.3,3);
\draw[ultra thin] (-2.3,3) -- (-2.6,3);
\node[left][font = {\small}] at (-2.6,4.15) {$\frac{\sqrt{3}}{2}-t_a$};
\node[left][font = {\small}] at (-2.6,2.1) {$\frac{\sqrt{3}}{2}-v_a$};
\node[below][font = {\small}] at (-1.5,.85) {$u_a+s_a$};
\end{tikzpicture}
    \caption{
  $L_a(F_{n_1} \rightarrow F_{n_8})$ and $T_{O}(p_1, p_2)$,\\
  where $O=(L_a,F_{n_1},F_{n_{\ell}})$ and $a \in \{4,5,6\}$
}
  \label{Trail 4} 
\end{figure}
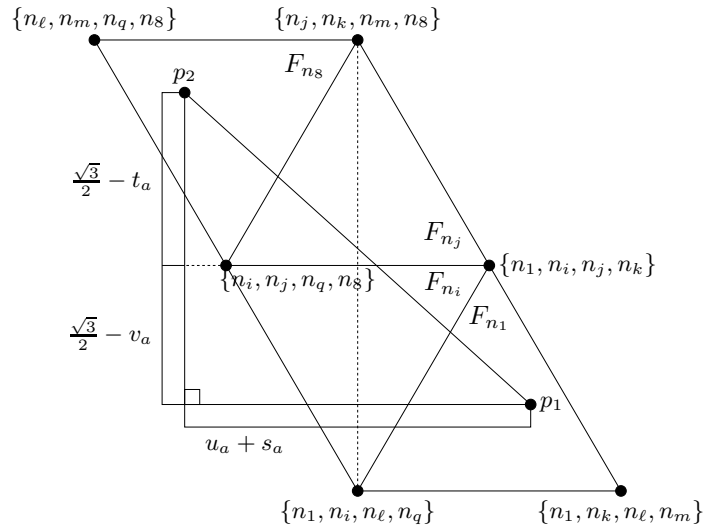

We will first construct landscapes having the general structure depicted in Figure \ref{Trail 4}, namely $L_a(F_{n_1} \rightarrow F_{n_8})$, where $a \in \{4,5,6\}$. Since each of the three landscapes will have the same general structure, if we apply Lemma \ref{octahedronrepresentation} a sufficient number of times so that the face $F_{n_{\ell}}$ is the shared face for the representation of $p_1$ and $F_{n_m}$ is the shared face for the representation of $p_2$, then the same transformation takes us from $p_2$'s standard position with respect to this representation to $p_2$'s new position in the orientation $(L_a, F_{n_1}, F_{n_{\ell}})$. Specifically, in each case, since $p_2$'s home-face is $F_{n_8}$ and shared-face in this representation is $F_{n_m}$, in the orientation $(L_a, F_{n_1}, F_{n_{\ell}})$, the point $p_2$ has been 
rotated $180$ degrees counterclockwise about the origin and shifted $\sqrt{3}$ units up from $p_2$'s standard position with respect to the representation $(F_{n_8},F_{n_m},u_a,v_a)$, and thus $p_2(L_a,F_{n_1},F_{n_{\ell}})=(-u_a,-v_a+\sqrt{3})$.

For $L_4$, we will respectively let the second and third faces, $F_{n_i}$ and $F_{n_j}$, be $F_{n_6}$ and $F_{n_5}$. In turn, we get that $n_k=n_2$, $n_{\ell}=n_4$, $n_m=n_3$, and $n_q=n_7$. By application of Lemma \ref{octahedronrepresentation} twice, we see that $p_1= \left(F_{n_1}, F_{n_4}, \frac{2-x_1-\sqrt{3}y_1}{2},\frac{\sqrt{3}x_1-y_1}{2}\right)$ and $p_2= \left(F_{n_8}, F_{n_3},\frac{2-x_2-\sqrt{3}y_2}{2},\frac{\sqrt{3}x_2-y_2}{2}\right)$. Thus, after application of the transformation  $p_2(L_4,F_{n_1},F_{n_4}) = \left(\frac{-2+x_2+\sqrt{3}y_2}{2},\frac{2\sqrt{3}-\sqrt{3}x_2+y_2}{2}\right)$ and we get that 
\[\abs{T_{L_4}(p_1, p_2)} = \sqrt{\left(\frac{2-x_1-\sqrt{3}y_1}{2}-\frac{-2+x_2+\sqrt{3}y_2}{2}\right)^2+\left(\frac{\sqrt{3}x_1-y_1}{2}-\frac{2\sqrt{3}-\sqrt{3}x_2+y_2}{2}\right)^2}.\]

For $L_5$, we will respectively let the second and third faces, $F_{n_i}$ and $F_{n_j}$, be $F_{n_2}$ and $F_{n_3}$. In turn, we get that $n_k=n_4$, $n_{\ell}=n_6$, $n_q=n_5$, and $n_m=n_7$. By an application of Lemma \ref{octahedronrepresentation} to $p_1$, we see that $p_1= \left(F_{n_1}, F_{n_6}, \frac {1-{x_1}+\sqrt{3}{y_1}}{2}, \frac{\sqrt{3}-\sqrt{3}{x_1}-{y_1}}{2}\right)$. Thus, after application of the transformation  
$p_2(L_5,F_{n_1},F_{n_6}) = (-x_2,-y_2+\sqrt{3})$ and we get that 
\[\abs{T_{L_5}(p_1, p_2)} = \sqrt{\left(\frac{1-x_1+\sqrt{3}y_1}{2}+x_2\right)^2+\left(\frac{\sqrt{3}-\sqrt{3}x_1-y_1}{2}+y_2-\sqrt{3}\right)^2}.\]

For $L_{6}$, we will respectively let the second and third faces, $F_{n_i}$ and $F_{n_j}$, be $F_{n_4}$ and $F_{n_7}$. In turn, we get that $n_{\ell}=n_2$, $n_k=n_6$, $n_q=n_3$, and $n_m=n_5$. By an application of Lemma \ref{octahedronrepresentation} to $p_2$, we see that $p_2= \left(F_{n_8}, F_{n_7}, \frac {1-{x_2}+\sqrt{3}{y_2}}{2}, \frac{\sqrt{3}-\sqrt{3}{x_2}-{y_2}}{2}\right)$. Thus, after application of the transformation  
$p_2(L_{6},F_{n_1},F_{n_2}) = \left(\frac{-1+x_2-\sqrt{3}y_2}{2},\frac{\sqrt{3}x_2+y_2+\sqrt{3}}{2}\right)$ and we get that 

\[\abs{T_{L_{6}}(p_1, p_2)} = \sqrt{\left(x_1-\frac{-1+x_2-\sqrt{3}y_2}{2}\right)^2+\left(y_1-\frac{\sqrt{3}x_2+y_2+\sqrt{3}}{2}\right)^2}.\]

We will now construct landscapes having the general structure depicted in Figure \ref{Trail 5}, namely $L_a(F_{n_1} \rightarrow F_{n_8})$, where $a \in \{7,8,9\}$.  Since each of the three landscapes will have the same general structure, if we apply Lemma \ref{octahedronrepresentation} a sufficient number of times so that the face $F_{n_{\ell}}$ is the shared face for the representation of $p_1$ and $F_{n_m}$ is the shared face for the representation of $p_2$, then the same transformation takes us from $p_2$'s standard position with respect to this representation to $p_2$'s new position in the orientation $(L_a, F_{n_1}, F_{n_{\ell}})$. Specifically, in each case, since $p_2$'s home-face is $F_{n_8}$ and shared-face in this representation is $F_{n_m}$, in the orientation $(L_a, F_{n_1}, F_{n_{\ell}})$, the point $p_2$ has been 
rotated $180$ degrees counterclockwise about the origin and shifted $2$ units to the right and $\sqrt{3}$ units up from $p_2$'s standard position with respect to the representation $(F_{n_8},F_{n_m},u_a,v_a)$ and thus $p_2(L_a,F_{n_1},F_{n_{\ell}})=(-u_a+2,-v_a+\sqrt{3})$.

For $L_{7}$, we will respectively let the second and third faces, $F_{n_i}$ and $F_{n_j}$, be $F_{n_6}$ and $F_{n_7}$. In turn, we get that $n_{\ell}=n_2$, $n_k=n_4$, $n_m=n_3$, and $n_q=n_5$. By two applications of Lemma \ref{octahedronrepresentation} to $p_2$, we see that $p_2=\left(F_{n_8}, F_{n_3},\frac{2-x_2-\sqrt{3}y_2}{2},\frac{\sqrt{3}x_2-y_2}{2}\right)$. Thus, after application of the transformation $p_2(L_7,F_{n_1},F_{n_2})=\left(\frac{2+x_2+\sqrt{3}y_2}{2},\frac{-\sqrt{3}x_2+y_2+2\sqrt{3}}{2}\right)$ and we get that 
\[\abs{T_{L_{7}}(p_1, p_2)} = \sqrt{\left(x_1-\frac{2+x_2+\sqrt{3}y_2}{2}\right)^2+\left(y_1-\frac{-\sqrt{3}x_2+y_2+2\sqrt{3}}{2} \right)^2}.\]

For $L_{8}$, we will respectively let the second and third faces, $F_{n_i}$ and $F_{n_j}$, be $F_{n_2}$ and $F_{n_5}$. In turn, we get that $n_{\ell}=n_4$, $n_k=n_6$, $n_q=n_3$, and $n_m=n_7$. By two applications of Lemma \ref{octahedronrepresentation} to $p_1$, we see that $p_1= \left(F_{n_1}, F_{n_4}, \frac{2-x_1-\sqrt{3}y_1}{2},\frac{\sqrt{3}x_1-y_1}{2}\right)$. Thus, after application of the transformation  
$p_2(L_{8},F_{n_1},F_{n_4}) = (-x_2+2,-y_2+\sqrt{3})$ and we get that 
\[\abs{T_{L_{8}}(p_1, p_2)} = \sqrt{\left(\frac{2-x_1-\sqrt{3}y_1}{2}+x_2-2)\right)^2+\left(\frac{\sqrt{3}x_1-y_1}{2}+y_2-\sqrt{3}\right)^2}.\]

For $L_{9}$, we will respectively let the second and third faces, $F_{n_i}$ and $F_{n_j}$, be $F_{n_4}$ and $F_{n_3}$. In turn, we get that $n_k=n_2$, $n_{\ell}=n_6$, $n_q=n_7$, and $n_m=n_5$. By an application of Lemma \ref{octahedronrepresentation} to $p_1$ and $p_2$, we see that $p_1=\left(F_{n_1}, F_{n_6}, \frac {1-{x_1}+\sqrt{3}{y_1}}{2}, \frac{\sqrt{3}-\sqrt{3}{x_1}-{y_1}}{2}\right)$ and $p_2= \left(F_{n_8}, F_{n_7}, \frac {1-{x_2}+\sqrt{3}{y_2}}{2}, \frac{\sqrt{3}-\sqrt{3}{x_2}-{y_2}}{2}\right)$. Thus, after application of the transformation  
$p_2(L_{9},F_{n_1},F_{n_6}) = \left(\frac {3+{x_2}-\sqrt{3}{y_2}}{2}, \frac{\sqrt{3}+\sqrt{3}{x_2}+{y_2}}{2}\right)$ and we get that 

\[\abs{T_{L_{9}}(p_1, p_2)} = \sqrt{\left(\frac {1-{x_1}+\sqrt{3}{y_1}}{2}-\frac {3+{x_2}-\sqrt{3}{y_2}}{2}\right)^2+\left(\frac{\sqrt{3}-\sqrt{3}{x_1}-{y_1}}{2}-\frac{\sqrt{3}+\sqrt{3}{x_2}+{y_2}}{2}\right)^2}.\]

\begin{figure}[h]
  \centering
\begin{tikzpicture}[scale=1]
\filldraw[fill=black,draw=black] (0,6) circle (2pt);
\filldraw[fill=black,draw=black] (0,0) circle (2pt);
\filldraw[fill=black,draw=black] (-3.5,0) circle (2pt);
\filldraw[fill=black,draw=black] (3.5,6) circle (2pt);
\filldraw[fill=black,draw=black] (1.75,3) circle (2pt);
\filldraw[fill=black,draw=black] (-1.75,3) circle (2pt);
\draw[line width = 0.05mm] (0,0) -- (-3.5,0) -- (-1.75,3) -- (0,0) -- (1.75,3) -- (-1.75,3) -- (0,6) -- (1.75,3) -- (3.5,6) -- (0,6);
\node[below][font = {\small}] at (-3.5,0) {$\{n_{1},n_{k},n_{\ell},n_{m}\}$};
\node[above][font = {\small}] at (0,6) {$\{n_{j},n_{k},n_{m},n_{8}\}$};
\node[above][font = {\small}] at (3.5,6) {$\{n_{\ell},n_{m},n_{q},n_{8}\}$};
\node[below][font = {\small}] at (0,0) {$\{n_{1},n_{i},n_{\ell},n_{q}\}$};
\node[right][font = {\small}] at (1.75,3) {$\{n_{i},n_{j},n_{q},n_{8}\}$};
\node[left][font = {\small}] at (-1.75,3) {$\{n_{1},n_{i},n_{j},n_{k}\}$};
\node at (-1.75,2.2) {$F_{n_1}$};
\node at (2.8,5.65) {$F_{n_8}$};
\node at (-1.1,2.6) {$F_{n_i}$};
\node at (-1.1,3.4) {$F_{n_j}$};
\draw[ultra thin,dash pattern={on 1pt}] (0,6) -- (0,0);
\filldraw[fill=black,draw=black] (1.1,5.4) circle (2pt);
\node[left][font = {\small}] at (1.1,5.4) {$p_{2}$};
\filldraw[fill=black,draw=black] (-1.3,1.25) circle (2pt);
\node[left][font = {\small}] at (-1.3,1.25) {$p_{1}$};
\draw[line width = 0.05mm] (1.1,5.4) -- (-1.3,1.25);
\draw[line width=0.05mm](1.1,5.4) rectangle (1.5,1.25);
\draw[line width=0.05mm](-1.3,1.25) rectangle (1.1,.875);
\draw[line width=0.05mm](1.1,1.25) rectangle (.9,1.45);
\node[right, inner sep=1][font = {\small}] at (1.5,4.7) {$\frac{\sqrt{3}}{2}-t_a$};
\node[right, inner sep=1][font = {\small}] at (1.5,2.2) {$\frac{\sqrt{3}}{2}-v_a$};
\node[below, inner sep=2][font = {\small}] at (-.9,.875) {$1-u_a$};
\node[below, inner sep=2][font = {\small}] at (.9,.875) {$1-s_a$};
\draw[ultra thin](0,1.25) -- (0,.875);
\end{tikzpicture}
  \caption{
  $L_a(F_{n_1} \rightarrow F_{n_8})$ and $T_{O}(p_1, p_2)$,\\
  where $O=(L_a,F_{n_1},F_{n_{\ell}})$ and $a \in \{7,8,9\}$
}
  \label{Trail 5} 
\end{figure}
\end{proof}

\begin{theorem}\label{finaltheorem}
Let $L(F_{n_1} \rightarrow F_{n_i})$ with $i \in \{1,2,3,4,5,6,7,8\}$, be a landscape of $\mathcal P_8$.  If $L \not \in \{L_a\}_{a=1}^{9}$, then $L$ is not valid.
\end{theorem}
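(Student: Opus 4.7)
The plan is to combine the sublandscape clause of validity with a geometric comparison to rule out every landscape outside $\{L_a\}_{a=1}^{9}$. First I would establish a reduction: if $L(F_{n_1} \to F_{n_k})$ has dual path $F_{n_1}, \ldots, F_{n_k}$ and $p_1 \in F_{n_i}$, $p_2 \in F_{n_j}$ with $i \leq j$, then provided $(i,j) \neq (1,k)$ the faces $F_{n_i}, F_{n_{i+1}}, \ldots, F_{n_j}$ form a proper sublandscape $L'$ of $L$, and the restriction of the function defining $L$'s orientation to $L'$ agrees, up to a rigid motion of the plane, with the function defining $L'$'s orientation. Consequently $\abs{T_{L'}(p_1,p_2)} = \abs{T_L(p_1,p_2)}$, so the sublandscape clause of the validity definition precludes $L$ from being valid with these witnesses. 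Thus if $L$ were valid, any witnesses would have to lie in the two endpoint faces $F_{n_1}$ and $F_{n_k}$.

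Next I would use the structure of the dual graph of $\mathcal{P}_8$ to catalogue the landscapes that are not among $\{L_a\}_{a=1}^{9}$. The dual graph is the bipartite cube graph, with color classes $\{F_1,F_3,F_5,F_7\}$ and $\{F_2,F_4,F_6,F_8\}$ and diameter three, so the parity of the length of a dual path matches whether its endpoints lie in the same or different class. Working through the cases: three-face landscapes occur only between vertex-sharing non-opposite face pairs and are already enumerated by $L_2$ and $L_3$; four-face landscapes occur between opposite faces (exhausted by $L_4,\ldots,L_9$) or between adjacent faces (two per adjacent pair, not in the list); and every landscape with five or more faces lies outside the list. In each excess case there is a valid $L_a \in \{L_a\}_{a=1}^{9}$ sharing the same endpoint faces and with strictly fewer faces.

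Finally, the core step: for every excess $L$ and every $p_1 \in F_{n_1}$, $p_2 \in F_{n_k}$, I would argue $\abs{T_{L_a}(p_1,p_2)} < \abs{T_L(p_1,p_2)}$ strictly for at least one appropriate $L_a$. Applying Lemma~\ref{octahedronrepresentation} enough times, the unfolded position of $p_2$ in $L$'s orientation is obtained by composing the planar rigid motions appearing in the proofs of Theorems~\ref{octahedron_Adjacent_Face}, \ref{octahedron_non}, and \ref{octahedron_opposite_Face}. The squared trail length then becomes a polynomial in $x_1,y_1,x_2,y_2$, and the inequality reduces to an algebraic comparison against the polynomial for the chosen $L_a$. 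This last step is the main obstacle: the underlying geometric phenomenon is that each additional face past a shortest dual path swings the image of $p_2$ around a vertex of angular defect $2\pi/3$, strictly increasing its Euclidean distance from $p_1$, but verifying this rigorously requires either exploiting the symmetries of $\mathcal{P}_8$ to consolidate cases or performing explicit algebraic checks in each remaining configuration. Because the $L_a$ in question is typically not a sublandscape of $L$, only a strict inequality suffices, so care must be taken at boundary configurations where the line segment passes through an edge or vertex of $\mathcal{P}_8$.
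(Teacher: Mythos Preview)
Your overall strategy---reduce to witnesses in the endpoint faces, then show every excess landscape is beaten by some $L_a$---is sound, but it diverges from the paper's route and leaves the hardest step undone. The paper avoids your proposed algebraic comparison entirely. Instead it isolates four specific ``forbidden'' landscapes $L_1^*,L_2^*,L_3^*,L_4^*$ (two four-face landscapes whose endpoint faces are adjacent, and two five-face landscapes whose endpoint faces share only a vertex), asserts by exhaustion that every landscape starting $F_{n_1},F_{n_2}$ and lying outside $\{L_a\}_{a=1}^9$ contains one of these as a sublandscape, and then shows each $L_a^*$ is invalid by a \emph{triangle-inequality} argument rather than a coordinate computation: the straight trail in $L_a^*$ is cut at the two interior edge-crossings $w,z$; the triangles $\triangle w\{n_1,n_2,n_3,n_4\}z$ on either side of the shared vertex are congruent, so the pieces $p_1w$, $wz$, $zp_2$ have the same lengths when refolded into the short landscape $\hat L$ (namely $L_1(F_{n_1}\!\to\!F_{n_4})$ for $L_1^*$, and $L_3(F_{n_1}\!\to\!F_{n_7})$ for $L_3^*$); and then the straight segment $p_1p_2$ in $\hat L$ is strictly shorter than the broken path $p_1\!-\!w\!-\!z\!-\!p_2$. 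Cases $L_2^*$ and $L_4^*$ follow by reflection.

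What this buys the paper is that only two synthetic arguments are needed, with no coordinate algebra at all; the angular-defect heuristic you invoke is made precise by an explicit congruence of triangles rather than by an inequality between polynomial expressions. What your approach would buy, if carried out, is a direct inequality $|T_{L_a}(p_1,p_2)|<|T_L(p_1,p_2)|$ against a single comparison landscape, which is conceptually tidier than the paper's two-stage ``forbidden sublandscape plus invalidity'' reduction---but you would have to execute the polynomial comparison for every combinatorial type of excess dual path, and there are many more than four once the path has five or more faces. That case explosion is precisely the obstacle you flag, and it is exactly what the paper's forbidden-sublandscape device circumvents.
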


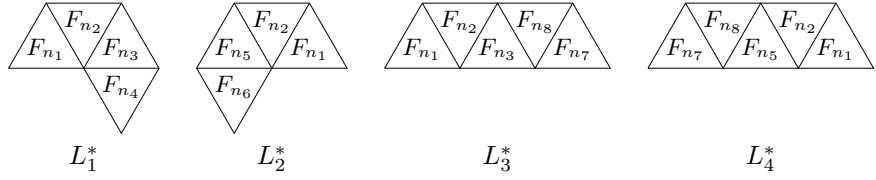
\begin{figure}[h]
  \centering
\begin{tikzpicture}[scale=1]

\draw [line width=0.05mm] (0,0) -- (.5,.866) -- (1.5,.866) -- (2,0) -- (1.5,-.866) -- (1,0) -- (0,0);
\draw [line width=0.05mm] (.5,.866) -- (1,0);
\draw [line width=0.05mm] (1.5,.866) -- (1,0);
\draw [line width=0.05mm] (2,0) -- (1,0);
0 \node[below, outer sep=10][font = {\small}] at (.5,.866) {$F_{n_1}$};
\node[above, outer sep=10][font = {\small}] at (1,0) {$F_{n_2}$};
\node[below, outer sep=10][font = {\small}] at (1.5,.866) {$F_{n_3}$};
\node[above, outer sep=10][font = {\small}] at (1.5,-.866) {$F_{n_4}$};

\draw [line width=0.05mm] (2.5,0) -- (3,.866) -- (4,.866) -- (4.5,0) -- (3.5,0) -- (3,-.866) -- (2.5,0);
\draw [line width=0.05mm] (2.5,0) -- (3.5,0);
\draw [line width=0.05mm] (4,.866) -- (3.5,0);
\draw [line width=0.05mm] (3,.866) -- (3.5,0);
\node[below, outer sep=10][font = {\small}] at (4,.866) {$F_{n_1}$};
\node[above, outer sep=10][font = {\small}] at (3.5,0) {$F_{n_2}$};
\node[below, outer sep=10][font = {\small}] at (3,.866) {$F_{n_5}$};
\node[above, outer sep=10][font = {\small}] at (3,-.866) {$F_{n_6}$};

\draw [line width=0.05mm] (5,0) -- (5.5,.866) -- (6.5,.866) -- (7.5,.866) -- (8,0) -- (7,0) -- (6,0) -- (5,0);
\draw [line width=0.05mm] (5.5,.866) -- (6,0) -- (6.5,.866) -- (7,0) -- (7.5,.866);
\node[below, outer sep=10][font = {\small}] at (5.5,.866) {$F_{n_1}$};
\node[above, outer sep=10][font = {\small}] at (6,0) {$F_{n_2}$};
\node[below, outer sep=10][font = {\small}] at (6.5,.866) {$F_{n_3}$};
\node[above, outer sep=10][font = {\small}] at (7,0) {$F_{n_8}$};
\node[below, outer sep=10][font = {\small}] at (7.5,.866) {$F_{n_7}$};

\draw [line width=0.05mm] (8.5,0) -- (9,.866) -- (10,.866) -- (11,.866) -- (11.5,0) -- (10.5,0) -- (9.5,0) -- (8.5,0);
\draw [line width=0.05mm] (9,.866) -- (9.5,0) -- (10,.866) -- (10.5,0) -- (11,.866);
\node[below, outer sep=10][font = {\small}] at (9,.866) {$F_{n_7}$};
\node[above, outer sep=10][font = {\small}] at (9.5,0) {$F_{n_8}$};
\node[below, outer sep=10][font = {\small}] at (10,.866) {$F_{n_5}$};
\node[above, outer sep=10][font = {\small}] at (10.5,0) {$F_{n_2}$};
\node[below, outer sep=10][font = {\small}] at (11,.866) {$F_{n_1}$};

\node at (1,-1.2) {$L^{*}_{1}$};
\node at (3.5,-1.2) {$L^{*}_{2}$};
\node at (6.5,-1.2) {$L^{*}_{3}$};
\node at (10,-1.2) {$L^{*}_{4}$};

\end{tikzpicture}
\caption{Invalid Landscapes}
  \label{Invalid Landscapes}
\end{figure}

\begin{proof}
Let $\{n_1,n_2,n_3,n_4,n_5,n_6,n_7,n_8\}$ $=$ $\{1,2,3,4,5,6,7,8\}$ such that in nets of our copy of $\mathcal P_8$, $F_{n_2}$, $F_{n_4}$, and $F_{n_6}$ occur about $F_{n_1}$ in the same counter-clockwise order as $F_2$, $F_4$, and $F_6$ occur about $F_1$, and $F_{n_b}$ and $F_{n_c}$ are opposite faces given that $b+c=9$.  We state the following claim.

\vspace{0.1in}

\begin{addmargin}[0.87cm]{0cm}
\noindent {\bf Claim.} No landscape in the set $\{L_a^*\}_{a=1}^4$ depicted in Figure \ref{Invalid Landscapes} is valid.

\vspace{0.1in}

Before we prove the claim, it is worth noting that one can show through exhaustion that every landscape of the octahedron with initial face $F_{n_1}$ and second face $F_{n_2}$ not in the set $\{L_a\}_{a=1}^{9}$ contains an element of $\{L_a^*\}_{a=1}^4$ as a sublandscape; and thus, once the claim is proven, it is clear that no such landscape is valid.  Furthermore, due to the symmetries of the octahedron, any landscape with initial face $F_{n_1}$ can be mapped under rotation, and thus a reidentification of $\{n_2,n_4,n_6\} \rightarrow \{2,4,6\}$ and their corresponding opposite faces, to a landscape with initial face $F_{n_1}$ and second face $F_{n_2}$.  Finally, since path lengths on an octahedron are invariant under such reidentifications, it follows that any landscape with initial face $F_{n_1}$ not in the set $\{L_a\}_{a=1}^{9}$ must be invalid.  With this in mind, the following case by case analysis of the family of landscapes $\{L_a^*\}_{a=1}^4$ completes the proof of the theorem.

\vspace{0.1in}

\noindent {\em Proof of Claim.}

\vspace{-0.1in}

\setcounter{case}{0}

\begin{case} Landscape $L_1^*$
\end{case}

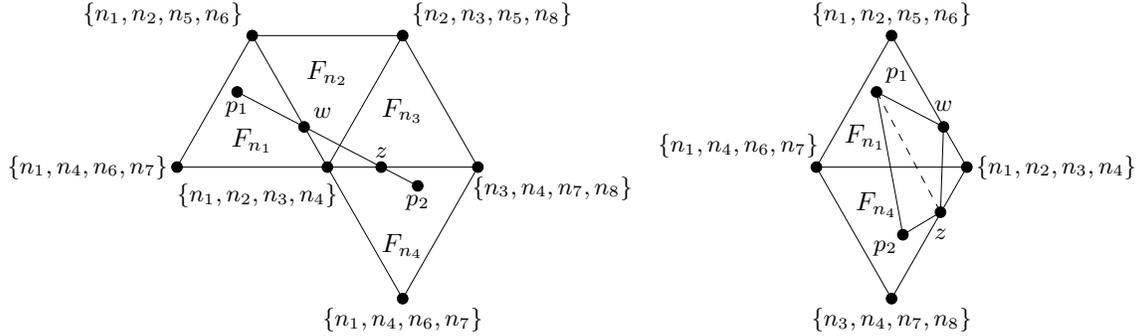
\begin{figure}[h]
  \centering
\begin{tikzpicture}[scale=1]

\filldraw[fill=black,draw=black] (6.5,1.75) circle (2pt);
\filldraw[fill=black,draw=black] (7.5,0) circle (2pt);
\filldraw[fill=black,draw=black] (8.5,1.75) circle (2pt);
\filldraw[fill=black,draw=black] (7.5,3.5) circle (2pt);
\draw[line width = 0.05mm] (6.5,1.75) -- (7.5,0) -- (8.5,1.75) -- (7.5,3.5) -- (6.5,1.75);
\draw[line width = 0.05mm] (6.5,1.75) -- (8.5,1.75);
\node[above left][font = {\small}] at (6.65,1.75) {$\{n_{1},n_{4},n_{6},n_{7}\}$};
\node[below][font = {\small}] at (7.5,0) {$\{n_{3},n_{4},n_{7},n_{8}\}$};
\node[right][font = {\small}] at (8.5,1.75) {$\{n_{1},n_{2},n_{3},n_{4}\}$};
\node[above][font = {\small}] at (7.5,3.5) {$\{n_{1},n_{2},n_{5},n_{6}\}$};
\filldraw[fill=black,draw=black] (7.3,2.75) circle (2pt);
\filldraw[fill=black,draw=black] (7.65,.85) circle (2pt);
\filldraw[fill=black,draw=black] (8.19,2.285) circle (2pt);
\filldraw[fill=black,draw=black] (8.15,1.15) circle (2pt);
\draw[line width = 0.05mm] (7.3,2.75) -- (7.65,.85);
\draw[line width = 0.05mm] (7.3,2.75) -- (8.19,2.285);
\draw[line width = 0.05mm] (7.65,.85) -- (8.15,1.15);
\draw[line width = 0.05mm] (8.19,2.285) -- (8.15,1.15);
\node[above right][font = {\small}] at (7.3,2.75) {$p_{1}$};
\node[below left, inner sep=2][font = {\small}] at (7.65,.85) {$p_{2}$};
\node[above, inner sep=6][font = {\small}] at (8.19,2.285) {$w$};
\node[below, inner sep=6][font = {\small}] at (8.15,1.15) {$z$};
\node at (7.15,2.15) {$F_{n_1}$};
\node at (7.3,1.25) {$F_{n_4}$};
\draw[line width=0.05mm][dash pattern={on 3pt}] (7.3,2.75) -- (8.15,1.15);

\filldraw[fill=black,draw=black] (0,1.75) circle (2pt);
\filldraw[fill=black,draw=black] (1,0) circle (2pt);
\filldraw[fill=black,draw=black] (2,1.75) circle (2pt);
\filldraw[fill=black,draw=black] (1,3.5) circle (2pt);
\filldraw[fill=black,draw=black] (-1,3.5) circle (2pt);
\filldraw[fill=black,draw=black] (-2,1.75) circle (2pt);
\draw[line width = 0.05mm] (0,1.75) -- (1,0) -- (2,1.75) -- (1,3.5) -- (-1,3.5) -- (-2,1.75) -- (0,1.75);
\draw[line width = 0.05mm] (0,1.75) -- (-1,3.5);
\draw[line width = 0.05mm] (0,1.75) -- (2,1.75);
\draw[line width = 0.05mm] (0,1.75) -- (1,3.5);
\node[left][font = {\small}] at (-2,1.75) {$\{n_{1},n_{4},n_{6},n_{7}\}$};
\node[above left][font = {\small}] at (-1,3.5) {$\{n_{1},n_{2},n_{5},n_{6}\}$};
\node[below left, inner sep=6][font = {\small}] at (0.35,1.75) {$\{n_{1},n_{2},n_{3},n_{4}\}$};
\node[above right][font = {\small}] at (1,3.5) {$\{n_{2},n_{3},n_{5},n_{8}\}$};
\node[below right][font = {\small}] at (1.8,1.75) {$\{n_{3},n_{4},n_{7},n_{8}\}$};
\node[below][font = {\small}] at (1,0) {$\{n_{1},n_{4},n_{6},n_{7}\}$};
\filldraw[fill=black,draw=black] (-1.2,2.75) circle (2pt);
\filldraw[fill=black,draw=black] (1.2,1.5) circle (2pt);
\filldraw[fill=black,draw=black] (-.31,2.285) circle (2pt);
\filldraw[fill=black,draw=black] (.715,1.75) circle (2pt);
\draw[line width = 0.05mm] (-1.2,2.75) -- (1.2,1.5);
\node[below][font = {\small}] at (-1.2,2.75) {$p_{1}$};
\node[below][font = {\small}] at (1.2,1.5) {$p_{2}$};
\node[above right][font = {\small}] at (-.31,2.285) {$w$};
\node[above][font = {\small}] at (.715,1.75) {$z$};
\node at (-1,2.1) {$F_{n_1}$};
\node at (0,3) {$F_{n_2}$};
\node at (1,2.5) {$F_{n_3}$};
\node at (1,0.7) {$F_{n_4}$};

\end{tikzpicture}
    \caption{Trail on $L_1^*$}
\end{figure}

Let $p_1 \in {F_{n_1}}, p_2 \in {F_{n_4}}$ and let $O_1^*$ be some orientation of $L_1^*$.  We will have two cases.

\vspace{0.1in}

\begin{addmargin}[0.87cm]{0cm}
    {\bf Subcase 1a.} {\em Either $p_1 \in \partial F_{n_1} \cap \partial F_{n_2}$ or $p_2 \in \partial F_{n_3} \cap \partial F_{n_4}$}
    
   \vspace{0.1 in}
    
  Since either $p_1 \in \partial F_{n_1} \cap \partial F_{n_2}$ or $p_2 \in \partial F_{n_3} \cap \partial F_{n_4}$, the trail is completely contained in $L_3(F_{n_1}\rightarrow F_{n_3})$ or $L_3(F_{n_2}\rightarrow F_{n_4})$. Also, since $L_3(F_{n_1}\rightarrow F_{n_3})$ and $L_3(F_{n_2}\rightarrow F_{n_4})$ are proper sublandscapes of $L_1^*$, it follows that $T_{O_1^*}(p_1,p_2)$ does not witness the validity of $L_1^*$.
   \end{addmargin}
   
 \vspace{0.1in}  
   
\begin{addmargin}[0.87cm]{0cm}
    {\bf Subcase 1b.} { $p_1 \not \in \partial F_{n_1} \cap \partial F_{n_2}$ and $p_2 \not \in \partial F_{n_3} \cap \partial F_{n_4}$}
    \vspace{0.1 in}

Now let $w = T_{O_1^*}(p_1,p_2)\cap  \overline{\{n_1,n_2,n_5,n_6\}\{n_1,n_2,n_3,n_4\}}$, $z = T_{O_1^*}(p_1,p_2)\cap \overline{\{n_1,n_2,n_3,n_4\}\{n_3,n_4,n_7,n_8\}}$, $\hat{L}=L_1(F_{n_1}\rightarrow F_{n_4})$, and $\hat{O}=(\hat{L},F_{n_1},F_{n_4})$.  Since $w \ne p_1$ and $z \ne p_2$, it follows from the triangle inequality that 
\[\abs{T_{\hat{L}}(p_1,z)}\leq\abs{T_{\hat{L}}(p_1,w)}+\abs{T_{\hat{L}}(w,z)} \hspace{0.25in} \text{and} \hspace{0.25in} \abs{T_{\hat{L}}(p_1,p_2)}<\abs{T_{\hat{L}}(p_1,z)}+\abs{T_{\hat{L}}(z,p_2)}.\]
Thus,
\[\abs{T_{\hat{L}}(p_1,p_2)}<\abs{T_{\hat{L}}(p_1,w)}+\abs{T_{\hat{L}}(w,z)}+\abs{T_{\hat{L}}(z,p_2)}.\]
We have that $\abs{T_{L_1^*}(p_1,w)}=\abs{T_{\hat{L}}(p_1,w)}$ and $\abs{T_{L_1^*}(p_2,z)}=\abs{T_{\hat{L}}(p_2,z)}$. Since $\Delta w\{n_1,n_2,n_3,n_4\}z$ in $L_1^*$ is congruent with $\Delta w\{n_1,n_2,n_3,n_4\}z$ in $\hat{L}$,
we also have that $\abs{T_{L_1^*}(w,z)}=\abs{T_{\hat{L}}(w,z)}$.

As a result, we see that
\begin{equation*}
\begin{split}
    \abs{T_{L_1^*}(p_1,p_2)}&=\abs{T_{L_1^*}(p_1,w)}+\abs{T_{L_1^*}(w,z)}+\abs{T_{L_1^*}(z,p_2)}\\
    &=\abs{T_{\hat{L}}(p_1,w)}+\abs{T_{\hat{L}}(w,z)}+\abs{T_{\hat{L}}(z,p_2)}\\
    &>\abs{T_{\hat{L}}(p_1,p_2)}.
\end{split}
\end{equation*}

\end{addmargin}
 
We have thus constructed a trail, $T_{\hat{O}}(p_1,p_2)$, on the surface of $\mathcal P_8$ between $p_1$ and $p_2$ whose length is shorter than $T_{O_1^*}(p_1,p_2)$.  Due to this, $T_{O_1^*}(p_1,p_2)$ does not witness the validity of $L_1^*$.
\vspace{0.1in}

\noindent In either case, since $p_1$ and $p_2$ were chosen arbitrarily, $L_1^*$ can not be a valid landscape.

\begin{case}
Landscape $L_2^*$
\end{case}

Due to the symmetries of $\mathcal P_8$, Case 2 is, up to rigid transformation and relabeling, identical to Case 1 and thus an immediate result of the proof of Case 1.

\begin{case} Landscape $L_3^*$
\end{case}

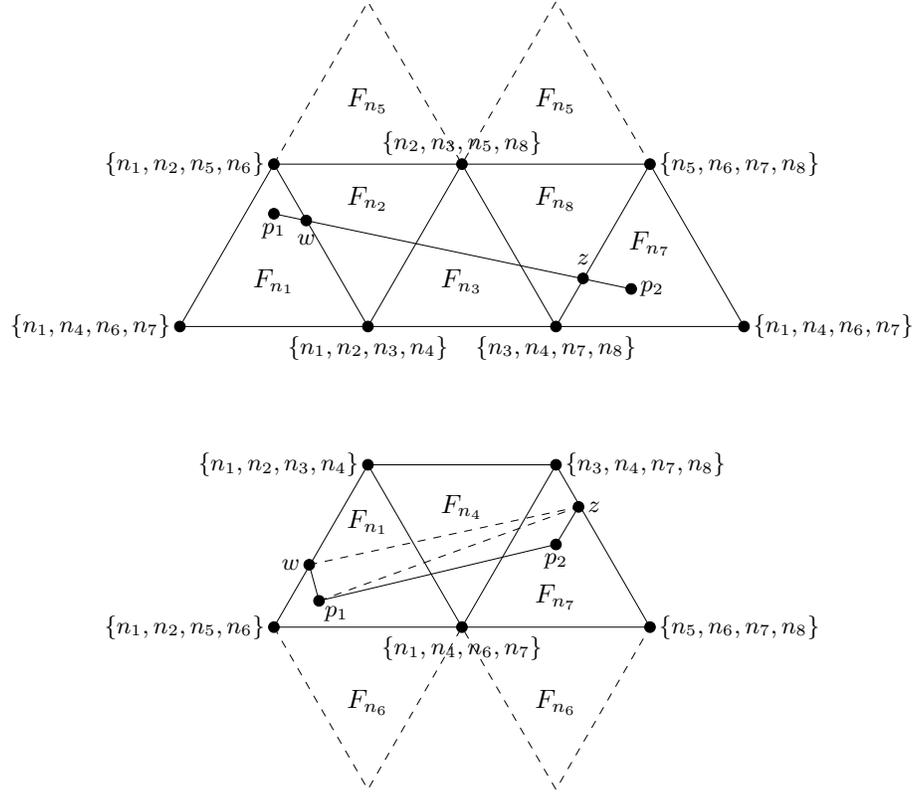
\begin{figure}[h]
  \centering
\begin{tikzpicture}[scale=1]

\filldraw[fill=black,draw=black] (0,0) circle (2pt);
\filldraw[fill=black,draw=black] (2.5,0) circle (2pt);
\filldraw[fill=black,draw=black] (5,0) circle (2pt);
\filldraw[fill=black,draw=black] (7.5,0) circle (2pt);
\filldraw[fill=black,draw=black] (1.25,2.16) circle (2pt);
\filldraw[fill=black,draw=black] (3.75,2.16) circle (2pt);
\filldraw[fill=black,draw=black] (6.25,2.16) circle (2pt);
\draw[line width = 0.05mm] (0,0) -- (1.25,2.16) -- (2.5,0) -- (3.75,2.16) -- (5,0) -- (6.25,2.16) -- (7.5,0);
\draw[line width = 0.05mm] (0,0) -- (7.5,0);
\draw[line width = 0.05mm] (1.25,2.16) -- (6.25,2.16);
\node[left][font = {\small}] at (0,0) {$\{n_{1},n_{4},n_{6},n_{7}\}$};
\node[below][font = {\small}] at (2.5,0) {$\{n_{1},n_{2},n_{3},n_{4}\}$};
\node[below][font = {\small}] at (5,0) {$\{n_{3},n_{4},n_{7},n_{8}\}$};
\node[right][font = {\small}] at (7.5,0) {$\{n_{1},n_{4},n_{6},n_{7}\}$};
\node[above, left][font = {\small}] at (1.25,2.16) {$\{n_{1},n_{2},n_{5},n_{6}\}$};
\node[above][font = {\small}] at (3.75,2.16) {$\{n_{2},n_{3},n_{5},n_{8}\}$};
\node[above, right][font = {\small}] at (6.25,2.16) {$\{n_{5},n_{6},n_{7},n_{8}\}$};
\filldraw[fill=black,draw=black] (1.25,1.5) circle (2pt);
\filldraw[fill=black,draw=black] (6,.5) circle (2pt);
\filldraw[fill=black,draw=black] (1.68,1.41) circle (2pt);
\filldraw[fill=black,draw=black] (5.36,.64) circle (2pt);
\draw[line width = 0.05mm] (1.25,1.5) -- (6,.5);
\node[below][font = {\small}] at (1.25,1.5) {$p_{1}$};
\node[right][font = {\small}] at (6,.5) {$p_{2}$};
\node[below, outer sep=2][font = {\small}] at (1.68,1.41) {$w$};
\node[above, outer sep=2][font = {\small}] at (5.36,.64) {$z$};
\node at (1.25,.6) {$F_{n_1}$};
\node at (2.5,1.7) {$F_{n_2}$};
\node at (3.75,.6) {$F_{n_3}$};
\node at (5,1.7) {$F_{n_8}$};
\node at (6.25,1.1) {$F_{n_7}$};
\draw[line width=0.05mm][dash pattern={on 3pt}] (1.25,2.16) -- (2.5,4.32) -- (3.75,2.16) -- (5,4.32) -- (6.25,2.16);
\node at (2.5,3) {$F_{n_5}$};
\node at (5,3) {$F_{n_5}$};

\filldraw[fill=black,draw=black] (1.25,-4) circle (2pt);
\filldraw[fill=black,draw=black] (3.75,-4) circle (2pt);
\filldraw[fill=black,draw=black] (6.25,-4) circle (2pt);
\filldraw[fill=black,draw=black] (2.5,-1.84) circle (2pt);
\filldraw[fill=black,draw=black] (5,-1.84) circle (2pt);
\draw[line width = 0.05mm] (1.25,-4) -- (2.5,-1.84) -- (3.75,-4) -- (5,-1.84) -- (6.25,-4) -- (1.25,-4);
\draw[line width = 0.05mm] (2.5,-1.84) -- (5,-1.84);
\node[left][font = {\small}] at (1.25,-4) {$\{n_{1},n_{2},n_{5},n_{6}\}$};
\node[below][font = {\small}] at (3.75,-4) {$\{n_{1},n_{4},n_{6},n_{7}\}$};
\node[right][font = {\small}] at (6.25,-4) {$\{n_{5},n_{6},n_{7},n_{8}\}$};
\node[left][font = {\small}] at (2.5,-1.84) {$\{n_{1},n_{2},n_{3},n_{4}\}$};
\node[right][font = {\small}] at (5,-1.84) {$\{n_{3},n_{4},n_{7},n_{8}\}$};
\filldraw[fill=black,draw=black] (1.85,-3.65) circle (2pt);
\filldraw[fill=black,draw=black] (5,-2.9) circle (2pt);
\filldraw[fill=black,draw=black] (1.72,-3.17) circle (2pt);
\filldraw[fill=black,draw=black] (5.3,-2.4) circle (2pt);
\draw[line width = 0.05mm] (1.72,-3.17) -- (1.85,-3.65) -- (5,-2.9) -- (5.3,-2.4);
\draw[line width=0.05mm][dash pattern={on 3pt}] (1.72,-3.17) -- (5.3,-2.4);
\draw[line width=0.05mm][dash pattern={on 3pt}] (1.85,-3.65) -- (5.3,-2.4);
\node[below right, inner sep=2][font = {\small}] at (1.85,-3.65) {$p_{1}$};
\node[below][font = {\small}] at (5,-2.9) {$p_{2}$};
\node[left][font = {\small}] at (1.72,-3.17) {$w$};
\node[right][font = {\small}] at (5.3,-2.4) {$z$};
\node at (2.5,-2.6) {$F_{n_1}$};
\node at (3.75,-2.4) {$F_{n_4}$};
\node at (5,-3.6) {$F_{n_7}$};
\draw[line width=0.05mm][dash pattern={on 3pt}] (1.25,-4) -- (2.5,-6.16) -- (3.75,-4) -- (5,-6.16) -- (6.25,-4);
\node at (2.5,-5) {$F_{n_6}$};
\node at (5,-5) {$F_{n_6}$};

\end{tikzpicture}
    \caption{Trail on $L_3^*$}
\end{figure}

    Let $p_1 \in {F_{n_1}}, p_2 \in {F_{n_7}}$ and let $O_3^*$ be some orientation of $L_3^*$.  Now let $w = T_{O_3^*}(p_1,p_2)\cap  \overline{\{n_1,n_2,n_5,n_6\}\{n_1,n_2,n_3,n_4\}}$ and $z = T_{O_3^*}(p_1,p_2)\cap \overline{\{n_5,n_6,n_7,n_8\}\{n_3,n_4,n_7,n_8\}}$. We will have two cases.

\vspace{0.1in}

\begin{addmargin}[0.87cm]{0cm}
    {\bf Subcase 3a.} {\em Either $w=p_1$ or $z=p_2$}
    
   \vspace{0.1 in}
    
  Since either $w=p_1$ or $z=p_2$, the trail is completely contained in $L_5(F_{n_1}\rightarrow F_{n_8})$ or $L_7(F_{n_2}\rightarrow F_{n_7})$.  Also, since $L_5(F_{n_1}\rightarrow F_{n_8})$ and $L_7(F_{n_2}\rightarrow F_{n_7})$ are proper sublandscapes of $L_3^*$, it follows that $T_{O_3^*}(p_1,p_2)$ does not witness the validity of $L_3^*$.
   \end{addmargin}
   
 \vspace{0.1in}  
   
\begin{addmargin}[0.87cm]{0cm}
    {\bf Subcase 3b.} { $w \ne p_1$ and $z \ne p_2$}
    \vspace{0.1 in}

Let $\hat{L}=L_3(F_{n_1}\rightarrow F_{n_7})$ and $\hat{O}=(\hat{L},F_{n_1},F_{n_6})$. 
Since $w \ne p_1$ and $z \ne p_2$, it follows from the triangle inequality that 
\[\abs{T_{\hat{L}}(p_1,z)}<\abs{T_{\hat{L}}(p_1,w)}+\abs{T_{\hat{L}}(w,z)} \hspace{0.25in} \text{and} \hspace{0.25in} \abs{T_{\hat{L}}(p_1,p_2)}<\abs{T_{\hat{L}}(p_1,z)}+\abs{T_{\hat{L}}(z,p_2)}.\]
Thus,
\[\abs{T_{\hat{L}}(p_1,p_2)}<\abs{T_{\hat{L}}(p_1,w)}+\abs{T_{\hat{L}}(w,z)}+\abs{T_{\hat{L}}(z,p_2)}.\]

Let $w(L_3^*, F_{n_2}, F_{n_5})=(x_{1}, y_{1})$, which can be represented as $w(L_3^*, F_{n_8}, F_{n_5})=(x_{1}+1, y_{1})$. Let $z(L_3^*, F_{n_8}, F_{n_5})=(x_{2}, y_{2})$. We get that
\[\abs{T_{L_3^*}{(w,z)}}=\sqrt{(x_{2}-1-x_{1})^2+(y_{2}-y_{1})^2}.\]

Since $\Delta \{n_1,n_2,n_5,n_6\}w\{n_2,n_3,n_5,n_8\}$ in $L_3^*$ is congruent with $\Delta \{n_1,n_2,n_5,n_6\}w\{n_1,n_4,n_6,n_7\}$ in $\hat{L}$, $w(L_3^*, F_{n_2}, F_{n_5})=(x_{1}, y_{1})$ can be represented as $w(\hat{L}, F_{n_1}, F_{n_6})=(1-x_{1}, y_{1})$. Since $\Delta \{n_2,n_3,n_5,n_8\}z\{n_5,n_6,n_7,n_8\}$ in $L_3^*$ is congruent with $\Delta \{n_1,n_4,n_6,n_7\}z\{n_5,n_6,n_7,n_8\}$ in $\hat{L}$, $z(L_3^*, F_{n_8}, F_{n_5})=(x_{2}, y_{2})$ can be represented as $z(\hat{L}, F_{n_7}, F_{n_6})=(1-x_{2}, y_{2})$, and it can also be represented as $z(\hat{L}, F_{n_1}, F_{n_6})=(2-x_{2}, y_{2})$. We get that
\[\abs{T_{\hat{L}}(w,z)}=\sqrt{(x_{1}+1-x_{2})^2+(y_{1}-y_{2})^2}.\]

It follows that $\abs{T_{L_3^*}(w,z)}=\abs{T_{\hat{L}}(w,z)}$. We also have that $\abs{T_{L_3^*}(p_1,w)}=\abs{T_{\hat{L}}(p_1,w)}$ and $\abs{T_{L_3^*}(p_2,z)}=\abs{T_{\hat{L}}(p_2,z)}$.
As a result, we see that
\begin{equation*}
\begin{split}
    \abs{T_{L_3^*}(p_1,p_2)}&=\abs{T_{L_3^*}(p_1,w)}+\abs{T_{L_3^*}(w,z)}+\abs{T_{L_3^*}(z,p_2)}\\
    &=\abs{T_{\hat{L}}(p_1,w)}+\abs{T_{\hat{L}}(w,z)}+\abs{T_{\hat{L}}(z,p_2)}\\
    &>\abs{T_{\hat{L}}(p_1,p_2)}.
\end{split}
\end{equation*}

\end{addmargin}
 
We have thus constructed a trail, $T_{\hat{O}}(p_1,p_2)$, on the surface of $\mathcal P_8$ between $p_1$ and $p_2$ whose length is shorter than $T_{O_3^*}(p_1,p_2)$.  Due to this, $T_{O_3^*}(p_1,p_2)$ does not witness the validity of $L_3^*$.
\vspace{0.1in}

\noindent In either case, since $p_1$ and $p_2$ were chosen arbitrarily, $L_3^*$ can not be a valid landscape.

\begin{case}
Landscape $L_4^*$
\end{case}

Due to the symmetries of $\mathcal P_8$, Case 4 is, up to rigid transformation and relabeling, identical to Case 3 and thus an immediate result of the proof of Case 3.

\vspace{0.1in}

\end{addmargin}

\noindent Having shown that no landscape in the family $\{L_a^*\}_{a=1}^4$ is valid, we have thus completed our proof. 
\end{proof}

\begin{corollary}
The octahedron has $9$ valid landscapes $\{L_a\}_{a=1}^{9}$.
\end{corollary}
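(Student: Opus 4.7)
The plan is to combine Theorem \ref{finaltheorem} with a case-by-case verification that each landscape $L_a$ for $a \in \{1,\ldots,9\}$ is itself valid. Theorem \ref{finaltheorem} gives the upper bound immediately: every valid landscape must belong to $\{L_a\}_{a=1}^9$. What remains is to exhibit witness points $p_1, p_2 \in L_a$ for each $a$ satisfying the two-part validity definition.

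For each $L_a$, I would place $p_1$ and $p_2$ in the interiors of the origin and destination faces, chosen so that the straight-line trail in the designated orientation stays inside $L_a$. Since interior points of the extreme faces never lie jointly in any proper sublandscape of $L_a$, the equality clause of the validity definition is automatically handled against sublandscapes. The remaining competing landscapes $L_j$ fall into two types: those containing $L_a$ as a sublandscape, for which the trails coincide as the same Euclidean segment and the equality clause is satisfied because $L_a$ does not contain the strictly larger $L_j$; and those not containing $L_a$, for which the trail in $L_j$ unfolds along a different route and is at least as long.

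For $L_1$, the verification is essentially immediate, since $L_1$ has only two faces and any interior witness points whose segment crosses the shared edge produce the geodesic directly. For $L_2$ and $L_3$, the two formulas in Theorem \ref{octahedron_non} are distinct quadratic functions of $(x_1, y_1, x_2, y_2)$, so each attains the strict minimum on some open region of configuration space, from which witness points may be drawn. The main obstacle lies in $L_4, \ldots, L_9$, where the six formulas of Theorem \ref{octahedron_opposite_Face} must be compared pairwise to exhibit an open region on which each of the six in turn attains the strict minimum over the family; here the 3-fold rotational symmetry of $\mathcal P_8$ about the axis through opposite faces, together with the distinctness of the six quadratic expressions, ensures that no landscape is dominated everywhere. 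Once these open regions are identified, assembling the nine individual validity verifications with Theorem \ref{finaltheorem} completes the proof that $\mathcal P_8$ has exactly nine valid landscapes.
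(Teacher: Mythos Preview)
Your overall architecture---Theorem~\ref{finaltheorem} for the upper bound, explicit witnesses for the lower bound---matches the paper exactly. The paper, however, discharges the lower bound by simply tabulating a concrete pair of witness points for each $L_a$ and appealing to a direct numerical check of the trail-length formulas from Theorems~\ref{octahedron_Adjacent_Face}, \ref{octahedron_non}, and \ref{octahedron_opposite_Face}. Your route is more structural, and as written it has two real gaps.

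First, the inference ``the two formulas are distinct quadratic functions, so each attains the strict minimum on some open region'' is not valid on its own: two distinct functions can satisfy $f<g$ everywhere. What makes the claim true here is a symmetry of $\mathcal P_8$ that interchanges $L_2$ with $L_3$ (and, within $\{L_4,\dots,L_9\}$, a transitive action of the symmetry group on the six landscapes). You invoke symmetry only for the opposite-face case; it is equally essential for $L_2$ versus $L_3$, and in both cases you must actually check that the relevant symmetries permute the landscapes as claimed.

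Second, and more seriously, your dichotomy for competing $L_j$ asserts that any $L_j$ not containing $L_a$ as a sublandscape has a trail ``at least as long.'' This is exactly the content to be proved, and it is false without restricting the witness points. For instance, with $p_1\in\inte(F_{n_1})$ and $p_2\in\inte(F_{n_5})$, a five-face landscape such as $F_{n_1},F_{n_4},F_{n_7},F_{n_8},F_{n_5}$ contains both points yet contains neither $L_2$ nor $L_3$ as a sublandscape; nothing you have said bounds its trail length. To close this, you would need to argue---using the \emph{proof} of Theorem~\ref{finaltheorem}, not just its statement---that the surface distance between interior points of $F_{n_1}$ and $F_{n_5}$ is always realized in $L_2$ or $L_3$ (and analogously for the opposite-face case), so that beating the other members of the family suffices. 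That reduction is available but is not automatic, and your proposal does not carry it out. The paper sidesteps the issue entirely by computing with explicit coordinates.
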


\begin{proof}
One can confirm via a short computation that the following table provides pairs of points which witness the validity of the landscapes $L_1, L_2, L_3, L_4, L_5, L_6, L_7, L_8$, and $L_9$.  Thus all nine of these landscapes are valid.  However, as shown by Theorem \ref{finaltheorem}, the octahedron has no other valid landscapes.
\begin{center}
\begin{tabular}{ |c|c|}
    \hline
    Landscape & Pair of Points Witnessing Validity\\
    \hline
    $L_1$ & \rule{0pt}{10pt}$(F_{n_1}, F_{n_2}, 0.5, 0.2),(F_{n_2}, F_{n_1}, 0.5, 0.2)$\\[2pt]
    \hline
    $L_2$ & \rule{0pt}{10pt}$(F_{n_1}, F_{n_2}, 0.5, 0.1), (F_{n_5}, F_{n_6}, 0.8, 0.1)$ \\[2pt]
    \hline
    $L_3$ & \rule{0pt}{10pt}$(F_{n_1}, F_{n_2}, 0.8, 0.1), (F_{n_5}, F_{n_6}, 0.5, 0.1)$ \\[2pt]
    \hline
    $L_4$& \rule{0pt}{10pt}$(F_{n_1}, F_{n_2}, 0.9, \frac {1}{6}), (F_{n_8}, F_{n_7}, 0.9, \frac {1}{6})$\\[2pt]
    \hline
    $L_5$ & \rule{0pt}{10pt}$(F_{n_1}, F_{n_2}, 0.1, 0.1), (F_{n_8}, F_{n_7}, 0.4, \frac {2}{3})$ \\[2pt]
    \hline
    $L_6$ & \rule{0pt}{10pt}$(F_{n_1}, F_{n_2}, 0.4, \frac {2}{3}), (F_{n_8}, F_{n_7}, 0.1, 0.1)$ \\[2pt]
    \hline
    $L_7$ & \rule{0pt}{10pt}$(F_{n_1}, F_{n_2}, 0.6, \frac {2}{3}), (F_{n_8}, F_{n_7}, 0.9, 0.1)$ \\[2pt]
    \hline
    $L_8$ & \rule{0pt}{10pt}$(F_{n_1}, F_{n_2}, 0.9, 0.1), (F_{n_8}, F_{n_7}, 0.6, \frac {2}{3})$ \\[2pt]
    \hline
    $L_9$ & \rule{0pt}{10pt}$(F_{n_1}, F_{n_2}, 0.1, \frac {1}{6}), (F_{n_8}, F_{n_7}, 0.1, \frac {1}{6})$ \\[2pt]
    \hline
\end{tabular}
\end{center}
\end{proof}

Having identified the collection of valid landscapes of $\mathcal P_8$, we provide the following corollary which identifies a way to determine the surface distance between any two points on a octahedron.

\begin{corollary}
Let $p_1,p_2$ be two distinct points with $p_1 \in F_n$ and $p_2 \in F_m$, for $F_n$ and $F_m$ distinct faces of the octahedron.
\begin{itemize}
    \item If $F_n$ and $F_m$ are adjacent, then
    \[d_{\mathcal P_8}(p_1,p_2) = \abs{T_{L_1}(p_1, p_2)}.\]
    \item If $F_n$ and $F_m$ are neither adjacent nor opposite, then
    \[d_{\mathcal P_8}(p_1,p_2) = \min\left\{\abs{T_{L_2}(p_1, p_2)}, \abs{T_{L_3}(p_1, p_2)}\right\}.\]
    \item If $F_n$ and $F_m$ are opposite, then \[d_{\mathcal P_8}(p_1,p_2) = \min\left\{\abs{T_{L_a}(p_1, p_2)}: 4 \leq a \leq 9\right\}.\]
\end{itemize}
\end{corollary}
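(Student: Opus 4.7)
The plan is to reduce the surface distance to a minimum over just the nine valid landscapes $L_1,\dots,L_9$, then to case-split on the geometric relationship between $F_n$ and $F_m$, and finally to substitute the trail length formulas obtained in Theorems~\ref{octahedron_Adjacent_Face}, \ref{octahedron_non}, and \ref{octahedron_opposite_Face}.

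First I would invoke the definition $d_{\mathcal P_8}(p_1,p_2)=\min_L\abs{T_L(p_1,p_2)}$, together with the convention that $\abs{T_L(p_1,p_2)}=\infty$ when the straight segment is not contained in the orientation, so the minimum is effectively taken over landscapes whose orientation contains $\overline{p_1 p_2}$. Using the previous corollary (there are exactly nine valid landscapes), I would argue that this minimum is in fact attained by a valid landscape: if the minimizer $L^*$ were invalid, the validity definition would produce another landscape $L'\ne L^*$ containing both points with $\abs{T_{L'}(p_1,p_2)}\le\abs{T_{L^*}(p_1,p_2)}$, and in the equality case $L'$ would be a proper sublandscape of $L^*$. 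Since there are only finitely many landscapes and the sublandscape relation strictly decreases face count, iterating this replacement must terminate at a valid landscape lying in the same argmin.

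Next I would perform the case analysis on $F_n$ and $F_m$. Any landscape containing both $p_1\in F_n$ and $p_2\in F_m$ must have $F_n$ and $F_m$ as the degree-one vertices of its dual-graph path. Thus the valid landscapes to consider are precisely those enumerated in the constructions of the three preceding theorems: $L_1$ alone when $F_n$ and $F_m$ are adjacent (dual path of two vertices), $L_2$ and $L_3$ when they are neither adjacent nor opposite (dual path of three vertices), and $L_4,\dots,L_9$ when they are opposite (dual path of four vertices). In each case, substituting the closed-form trail length expressions from those theorems yields the three pieces of the stated corollary.

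The main obstacle is the reduction step: although it is morally immediate from the definition of validity, one must verify carefully that the tie-breaking clause (equality combined with sublandscape containment) really does justify a descent argument on the number of faces, and that boundary configurations (points $p_1$ or $p_2$ on a shared edge or at a vertex, which admit multiple representations) are handled via the representation-switching rules at the start of Section~3 and Lemma~\ref{octahedronrepresentation}, so that the formulas remain invariant under the choice of representation. Once the minimum is secured over $\{L_a\}_{a=1}^9$, the rest is a mechanical substitution.
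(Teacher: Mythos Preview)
The paper states this corollary without proof, treating it as an immediate consequence of the preceding corollary (the nine valid landscapes) together with the trail-length formulas of Theorems~\ref{octahedron_Adjacent_Face}, \ref{octahedron_non}, and \ref{octahedron_opposite_Face}. Your reduction to the nine valid landscapes via a descent on the sublandscape relation makes that implicit step explicit and is the natural argument.

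One assertion in your sketch is not literally true and should be repaired: it is not the case that every landscape containing $p_1\in F_n$ and $p_2\in F_m$ has $F_n$ and $F_m$ as the degree-one vertices of its dual path. For instance, $L_4(F_{n_1}\to F_{n_8})$ contains $F_{n_6}$ as an interior face, so for adjacent $F_n=F_{n_1}$ and $F_m=F_{n_6}$ both points already lie in $L_4$. What \emph{is} true, and what you need, is that in any orientation of a landscape $L$ the straight segment $\overline{p_1p_2}$ crosses a contiguous subsequence of faces of the dual path beginning at $F_n$ and ending at $F_m$; hence the trail lies in a sublandscape with $F_n$ and $F_m$ as endpoints and has the same length there. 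With this correction your case split is justified: among landscapes with origin $F_n$ and destination $F_m$, Theorems~\ref{octahedron_Adjacent_Face}--\ref{octahedron_opposite_Face} enumerate them by dual-path length, Theorem~\ref{finaltheorem} eliminates all but $\{L_a\}_{a=1}^9$, and your descent argument shows the minimum is realized among these. The remainder is the mechanical substitution you describe.
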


Having identified the surface distance of points on the octahedron, we have reached the conclusion of the current discussion.  However, as shown in \cite{Fontenot}, the concepts developed here can be applied to any convex unit polyhedron, and thus much work remains to be done. In particular, this includes the remaining platonic solids.  Finally, a question that is worth asking is if this concept can be applied to any convex polyhedron or perhaps even certain classes of nonconvex polyhedra.

\section*{Acknowledgements}

The research of Emiko Saso and Xin Shi was supported by the Trinity College Summer Research Program.

\bibliographystyle{abbrv}
\bibliography{octahedron}

\end{document}